\newtheorem{theorem}{Theorem}
\newtheorem{axiom}{Axiom}
\newtheorem{corollary}[theorem]{Corollary}
\newtheorem{definition}[axiom]{Definition}
\newtheorem{lemma}[theorem]{Lemma}
\newtheorem{proposition}[theorem]{Proposition}
\newenvironment{remark}{\rem\rm}{\endrem}
\newcounter{unnumber}
\newenvironment{proof}{\prf\rm}{\hfill{$\blacksquare$}\endprf}
\newcommand{\R}{\mathbb{R}}%
\newcommand{\N}{\mathbb{N}}%
\newcommand{\e}{\varepsilon}%
\newcommand{\ol}{\overline}%
\newcommand{\ul}{\underline}%
\newcommand{\n}{{\nabla}}
\newcommand{\p}{{\partial}}
\newcommand{\ds}{\displaystyle}
\def\a{\alpha}
\def\oa{\overline{\alpha}}
\def\ua{\underline{\alpha}}
\def\b{\beta}
\def\e{\epsilon}
\def\m{\mu}
\def\s{\sigma}
\def\<{\langle}
\def\>{\rangle}
\DeclareMathOperator*\dom{dom}%
\DeclareMathOperator*\prox{prox}%
\DeclareMathOperator*\argmin{argmin}
\DeclareMathOperator*\crit{crit}
\DeclareMathOperator*\dist{dist}
\DeclareMathOperator*\sgn{sgn}
\title{An inertial forward-backward algorithm for the minimization of the sum of two nonconvex functions}
\author{Radu Ioan Bo\c{t} \thanks{University of Vienna, Faculty of Mathematics, Oskar-Morgenstern-Platz 1, A-1090 Vienna, Austria, 
email: radu.bot@univie.ac.at. Research partially supported by DFG (German Research Foundation), project BO 2516/4-1.} \and 
Ern\"{o} Robert Csetnek \thanks {University of Vienna, Faculty of Mathematics, Oskar-Morgenstern-Platz 1, A-1090 Vienna, Austria, 
email: ernoe.robert.csetnek@univie.ac.at. Research supported by DFG (German Research Foundation), project BO 2516/4-1.} 
 \and Szil\'{a}rd Csaba L\'{a}szl\'{o} \thanks{Technical University of Cluj-Napoca, Department of Mathematics, 400027 Cluj-Napoca, Romania, e-mail: 
  szilard.laszlo@math.utcluj.ro}}
\begin{document}
\maketitle

\noindent \textbf{Abstract.} We propose a forward-backward proximal-type algorithm with inertial/memory effects for  
minimizing the sum of a nonsmooth function with 
a smooth one in the nonconvex setting. The sequence of iterates generated by the algorithm converges to a critical point of 
the objective function provided an appropriate regularization of the objective satisfies the 
Kurdyka-\L{}ojasiewicz inequality, which is for instance fulfilled for semi-algebraic functions. We illustrate 
the theoretical results by considering two numerical experiments: the first one concerns the ability of recovering 
the local optimal solutions of nonconvex optimization problems, while the second one refers to the restoration of a noisy blurred image. \vspace{1ex}

\noindent \textbf{Key Words.} nonsmooth optimization, limiting subdifferential, Kurdyka-\L{}ojasiewicz inequality, Bregman distance, 
inertial proximal algorithm\vspace{1ex}

\noindent \textbf{AMS subject classification.}  90C26, 90C30, 65K10

\section{Introduction}\label{sec-intr}

Proximal-gradient splitting methods are powerful techniques used in order to solve optimization problems 
where the objective to be minimized is the sum of a finite collection of smooth and/or nonsmooth functions. The main feature 
of this class of algorithmic schemes is the fact that they access each function separately, either by a gradient step 
if this is smooth or by a proximal step if it is nonsmooth. 

In the convex case (when all the functions involved are convex), these methods are well understood, see for example 
\cite{bauschke-book}, where the reader can find a presentation of the most prominent methods, like the forward-backward, forward-backward-forward and 
the Douglas-Rachford splitting algorithms. 

On the other hand, the nonconvex case is less understood, one of the main difficulties coming from the fact that the proximal point operator is in general not anymore single-valued. However, 
one can observe a considerably progress in this direction when the functions in the objective have the \textit{Kurdyka-\L{}ojasiewicz property} (so-called \textit{KL functions}), as it is the case for the ones
with different analytic features.  This applies for both the forward-backward algorithm (see \cite{b-sab-teb}, \cite{att-b-sv2013}) and the forward-backward-forward algorithm (see \cite{b-c-inertial-nonc-ts}). We refer 
the reader also to \cite{attouch-bolte2009, att-b-red-soub2010, c-pesquet-r, f-g-peyp, h-l-s-t, ipiano} for literature concerning 
proximal-gradient splitting methods in the nonconvex case relying on the \textit{Kurdyka-\L{}ojasiewicz property}. 

A particular class of the proximal-gradient splitting methods are the ones with inertial/memory effects. These iterative schemes have their origins in the time discretization of some differential 
inclusions of second order type (see \cite{alvarez2000, alvarez-attouch2001}) and share the feature that the new iterate is defined by using the previous two iterates. The increasing interest in this class of algorithms is emphasized by a considerable number of papers written in the last fifteen years on this topic,  see  
\cite{alvarez2000, alvarez-attouch2001, alvarez2004, att-peyp-red, b-c-inertial, b-c-inertial-admm, b-c-inertialhybrid, b-c-inertial-nonc-ts, b-c-h-inertial, mainge2008, mainge-moudafi2008, moudafi-oliny2003, cabot-frankel2011, pesq-pust, chen-ma-yang, chan-ma-yang}. 

Recently, an inertial forward-backward type algorithm has been proposed and analyzed in \cite{ipiano} in the nonconvex setting, by assuming that the nonsmooth part of the objective function is convex, 
while the smooth counterpart is allowed to be nonconvex.  It is the aim of this paper to introduce an inertial forward-backward algorithm in the full nonconvex setting and to study its convergence properties. 
The techniques for proving the convergence of the numerical scheme use the same three main ingredients, as other algorithms for nonconvex optimization problems involving 
KL functions. More precisely, we show a sufficient decrease property for the iterates, the existence of a subgradient lower bound for the iterates gap and, finally, we use the analytic features of the objective function in order 
to obtain convergence, see \cite{b-sab-teb, att-b-sv2013}. The  {\it limiting (Mordukhovich) subdifferential} and its properties play an important role in the analysis. The main result of this paper shows that,
provided an appropriate regularization of the objective satisfies the Kurdyka-\L{}ojasiewicz property, the convergence of the inertial forward-backward algorithm is guaranteed. As a particular instance, 
we also treat the case when the objective function is semi-algebraic and present the convergence properties of the algorithm. 

In the last section of the paper we consider two numerical experiments. The first one has an academic character and shows the ability of
algorithms with inertial/memory effects to detect optimal solutions which are not found by the non-inertial versions (similar allegations can be found also in \cite[Section 5.1]{ipiano} and 
\cite[Example 1.3.9]{bertsekas}). The second one concerns the restoration of a noisy blurred image by using a nonconvex misfit functional with nonconvex regularization.

\section{Preliminaries}

In this section we recall some notions and results which are needed throughout this paper. Let $\N= \{0,1,2,...\}$ be the set of nonnegative integers. For $m\geq 1$, the Euclidean scalar product and the induced norm on $\R^m$
are denoted by $\langle\cdot,\cdot\rangle$ and $\|\cdot\|$, respectively. Notice that all the finite-dimensional spaces considered in the 
manuscript are endowed with the topology induced by the Euclidean norm. 

The {\it domain} of the function  $f:\R^m\rightarrow (-\infty,+\infty]$ is defined by $\dom f=\{x\in\R^m:f(x)<+\infty\}$. We say that $f$ is {\it proper} if $\dom f\neq\emptyset$.  
For the following generalized subdifferential notions and their basic properties we refer to \cite{boris-carte, rock-wets}. 
Let $f:\R^m\rightarrow (-\infty,+\infty]$ be a proper and lower semicontinuous function. If $x\in\dom f$, we consider the {\it Fr\'{e}chet (viscosity)  
subdifferential} of $f$ at $x$ as the set $$\hat{\partial}f(x)= \left \{v\in\R^m: \liminf_{y\rightarrow x}\frac{f(y)-f(x)-\<v,y-x\>}{\|y-x\|}\geq 0 \right \}.$$ For 
$x\notin\dom f$ we set $\hat{\partial}f(x):=\emptyset$. The {\it limiting (Mordukhovich) subdifferential} is defined at $x\in \dom f$ by 
$$\partial f(x)=\{v\in\R^m:\exists x_n\rightarrow x,f(x_n)\rightarrow f(x)\mbox{ and }\exists v_n\in\hat{\partial}f(x_n),v_n\rightarrow v \mbox{ as }n\rightarrow+\infty\},$$
while for $x \notin \dom f$, one takes $\partial f(x) :=\emptyset$.

Notice that in case $f$ is convex, these notions coincide with the {\it convex subdifferential}, which means that  
$\hat\partial f(x)=\partial f(x)=\{v\in\R^m:f(y)\geq f(x)+\<v,y-x\> \ \forall y\in \R^m\}$ for all $x\in\dom f$. 

Notice the inclusion $\hat\partial f(x)\subseteq\partial f(x)$ for each $x\in\R^m$. We will use the following closedness criteria 
concerning the graph of the limiting subdifferential: if $(x_n)_{n\in\N}$ and $(v_n)_{n\in\N}$ are sequences in $\R^m$ such that 
$v_n\in\partial f(x_n)$ for all $n\in\N$, $(x_n,v_n)\rightarrow (x,v)$ and $f(x_n)\rightarrow f(x)$ as $n\rightarrow+\infty$, then 
$v\in\partial f(x)$. 

The Fermat rule reads in this nonsmooth setting as: if $x\in\R^m$ is a local minimizer of $f$, then $0\in\partial f(x)$. Notice that 
in case $f$ is continuously differentiable around $x \in \R^m$ we have $\partial f(x)=\{\nabla f(x)\}$. Let us denote by 
$$\crit(f)=\{x\in\R^m: 0\in\partial f(x)\}$$ the set of {\it (limiting)-critical points} of $f$. Let us mention also the following subdifferential rule: 
if $f:\R^m\rightarrow(-\infty,+\infty]$ is proper and lower semicontinuous  and $h:\R^m\rightarrow \R$ is a continuously differentiable function, then $\partial (f+h)(x)=\partial f(x)+\nabla h(x)$ for all $x\in\R^m$. 

We turn now our attention to functions satisfying the {\it Kurdyka-\L{}ojasiewicz property}. This class of functions will play 
a crucial role when proving the convergence of the proposed inertial algorithm. For $\eta\in(0,+\infty]$, we denote by $\Theta_{\eta}$ the class of concave and continuous functions 
$\varphi:[0,\eta)\rightarrow [0,+\infty)$ such that $\varphi(0)=0$, $\varphi$ is continuously differentiable on $(0,\eta)$, continuous at $0$ and $\varphi'(s)>0$ for all 
$s\in(0, \eta)$. In the following definition (see \cite{att-b-red-soub2010, b-sab-teb}) we use also the {\it distance function} to a set, defined for $A\subseteq\R^m$ as $\dist(x,A)=\inf_{y\in A}\|x-y\|$  
for all $x\in\R^m$. 

\begin{definition}\label{KL-property} \rm({\it Kurdyka-\L{}ojasiewicz property}) Let $f:\R^m\rightarrow(-\infty,+\infty]$ be a proper and lower semicontinuous 
function. We say that $f$ satisfies the {\it Kurdyka-\L{}ojasiewicz (KL) property} at $\ol x\in \dom\partial f=\{x\in\R^m:\partial f(x)\neq\emptyset\}$ 
if there exists $\eta \in(0,+\infty]$, a neighborhood $U$ of $\ol x$ and a function $\varphi\in \Theta_{\eta}$ such that for all $x$ in the 
intersection 
$$U\cap \{x\in\R^m: f(\ol x)<f(x)<f(\ol x)+\eta\}$$ the following inequality holds 
$$\varphi'(f(x)-f(\ol x))\dist(0,\partial f(x))\geq 1.$$
If $f$ satisfies the KL property at each point in $\dom\partial f$, then $f$ is called a {\it KL function}. 
\end{definition}

The origins of this notion go back to the pioneering work of \L{}ojasiewicz \cite{lojasiewicz1963}, where it is proved that for a real-analytic function 
$f:\R^m\rightarrow\R$ and a critical point $\ol x\in\R^m$ (that is $\nabla f(\ol x)=0$), there exists $\theta\in[1/2,1)$ such that the function 
$|f-f(\ol x)|\|\nabla f\|^{-1}$ is bounded around $\ol x$. This corresponds to the situation when $\varphi(s)=s^{1-\theta}$. The result of 
\L{}ojasiewicz allows the interpretation of the KL property as a reparametrization of the function values in order to avoid flatness around the 
critical points. Kurdyka \cite{kurdyka1998} extended this property to differentiable functions definable in an o-minimal structure. 
Further extensions to the nonsmooth setting can be found in \cite{b-d-l2006, att-b-red-soub2010, b-d-l-s2007, b-d-l-m2010}. 

One of the remarkable properties of the KL functions is their ubiquitous in applications, according to \cite{b-sab-teb}. To the class of KL functions belong semi-algebraic, real sub-analytic, semiconvex, uniformly convex and 
convex functions satisfying a growth condition. We refer the reader to 
\cite{b-d-l2006, att-b-red-soub2010, b-d-l-m2010, b-sab-teb, b-d-l-s2007, att-b-sv2013, attouch-bolte2009} and the references therein  for more details regarding all the classes mentioned above and illustrating examples. 

An important role in our convergence analysis will be played by the following uniformized KL property given in \cite[Lemma 6]{b-sab-teb}. 

\begin{lemma}\label{unif-KL-property} Let $\Omega\subseteq \R^m$ be a compact set and let $f:\R^m\rightarrow(-\infty,+\infty]$ be a proper 
and lower semicontinuous function. Assume that $f$ is constant on $\Omega$ and $f$ satisfies the KL property at each point of $\Omega$.   
Then there exist $\varepsilon,\eta >0$ and $\varphi\in \Theta_{\eta}$ such that for all $\ol x\in\Omega$ and for all $x$ in the intersection 
\begin{equation}\label{int} \{x\in\R^m: \dist(x,\Omega)<\varepsilon\}\cap \{x\in\R^m: f(\ol x)<f(x)<f(\ol x)+\eta\}\end{equation} 
the following inequality holds \begin{equation}\label{KL-ineq}\varphi'(f(x)-f(\ol x))\dist(0,\partial f(x))\geq 1.\end{equation}
\end{lemma}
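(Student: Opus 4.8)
The plan is to combine the pointwise KL property at the points of $\Omega$ into one uniform estimate by a compactness (finite covering) argument, the crucial simplification being that $f$ is constant on $\Omega$. Denote by $c$ the common value of $f$ on $\Omega$, so that $f(\ol x)=c$ for every $\ol x\in\Omega$. Consequently the function-value window $\{x:f(\ol x)<f(x)<f(\ol x)+\eta\}=\{x:c<f(x)<c+\eta\}$ and the factor $\varphi'(f(x)-f(\ol x))=\varphi'(f(x)-c)$ no longer depend on $\ol x$; the entire $\ol x$-dependence of the claim is thus carried by the spatial set $\{x:\dist(x,\Omega)<\varepsilon\}$ alone. Hence it suffices to produce $\varepsilon,\eta>0$ and $\varphi\in\Theta_\eta$ for which $\varphi'(f(x)-c)\dist(0,\partial f(x))\geq 1$ holds whenever $\dist(x,\Omega)<\varepsilon$ and $c<f(x)<c+\eta$.

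First I would invoke the KL property at each $u\in\Omega$: there exist $\rho_u>0$, $\eta_u>0$ and $\varphi_u\in\Theta_{\eta_u}$ so that the KL inequality is valid on $B(u,\rho_u)\cap\{x:c<f(x)<c+\eta_u\}$, where $B(u,\rho_u)$ is an open ball contained in the neighborhood furnished by Definition \ref{KL-property}. The balls $\{B(u,\rho_u/2)\}_{u\in\Omega}$ form an open cover of the compact set $\Omega$, so I would extract a finite subcover indexed by $u_1,\dots,u_p$, writing $\rho_i:=\rho_{u_i}$ and $\varphi_i:=\varphi_{u_i}\in\Theta_{\eta_i}$. I then set $\eta:=\min_{1\leq i\leq p}\eta_i>0$ and $\varepsilon:=\tfrac12\min_{1\leq i\leq p}\rho_i>0$.

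The geometric step is to check that any admissible $x$ falls into one of the local neighborhoods. If $\dist(x,\Omega)<\varepsilon$, then by compactness of $\Omega$ the distance is attained at some $\ol x\in\Omega$ with $\|x-\ol x\|<\varepsilon$; since $\ol x$ lies in some $B(u_i,\rho_i/2)$, the triangle inequality yields $\|x-u_i\|<\varepsilon+\rho_i/2\leq\rho_i$, so $x\in B(u_i,\rho_i)$. Combined with $c<f(x)<c+\eta\leq c+\eta_i$, the point $x$ lies in the region where the KL inequality for $u_i$ applies, whence $\varphi_i'(f(x)-c)\dist(0,\partial f(x))\geq 1$.

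It remains to merge the $\varphi_i$ into a single function, which is the only delicate point. I would define $\varphi:=\sum_{i=1}^p\varphi_i$ on $[0,\eta)$. As a finite sum of restrictions to $[0,\eta)$ of functions $\varphi_i\in\Theta_{\eta_i}$, the function $\varphi$ is again concave, nonnegative and continuous on $[0,\eta)$, continuously differentiable on $(0,\eta)$, vanishes at $0$, and satisfies $\varphi'(s)=\sum_{i=1}^p\varphi_i'(s)>0$; hence $\varphi\in\Theta_\eta$. Since all summands have positive derivative, $\varphi'(f(x)-c)\geq\varphi_i'(f(x)-c)$ for the index $i$ produced above, and therefore $\varphi'(f(x)-c)\dist(0,\partial f(x))\geq\varphi_i'(f(x)-c)\dist(0,\partial f(x))\geq 1$, as required. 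The main obstacle is precisely this amalgamation: one must build an aggregate $\varphi$ that simultaneously dominates every $\varphi_i'$ and remains inside the class $\Theta_\eta$ (concavity and $C^1$-smoothness in particular); taking the sum---rather than, say, a pointwise maximum of the $\varphi_i$, which need not be concave---achieves both at once.
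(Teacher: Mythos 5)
Your proof is correct, and it is essentially the standard argument for this result: the paper itself gives no proof but cites it as Lemma~6 of Bolte--Sabach--Teboulle \cite{b-sab-teb}, whose proof proceeds exactly as you do --- a finite subcover of $\Omega$ by KL neighborhoods, shrunk radii to absorb the $\varepsilon$-fattening, and the aggregate desingularizing function $\varphi=\sum_{i=1}^p\varphi_i$, which stays in $\Theta_\eta$ and dominates each $\varphi_i'$ termwise.
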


We close this section by presenting two convergence results which will play a determined role in the proof of the results we provide in the next section. The first one was often used in the literature
in the context of Fej\'{e}r monotonicity techniques for proving convergence results of classical algorithms for convex optimization problems or more generally for monotone inclusion problems (see \cite{bauschke-book}). 
The second one is probably also known, see for example \cite{b-c-inertial-nonc-ts}. 

\begin{lemma}\label{fejer1} Let $(a_n)_{n\in\N}$ and $(b_n)_{n\in\N}$ be real sequences such that $b_n\geq 0$ for all $n\in\N$, 
$(a_n)_{n\in\N}$ is bounded below and $a_{n+1}+b_n\leq a_n$ for all $n\in\N$. Then $(a_n)_{n\in\N}$ is a monotically decreasing and convergent 
sequence and $\sum_{n\in \N}b_n< + \infty$.
\end{lemma}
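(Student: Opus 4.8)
The plan is to establish the monotonicity first, deduce convergence from the monotone convergence theorem, and then extract summability via a telescoping argument.

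First I would observe that the nonnegativity $b_n\geq 0$ combined with the hypothesis $a_{n+1}+b_n\leq a_n$ immediately yields $a_{n+1}\leq a_n$ for every $n\in\N$, so that $(a_n)_{n\in\N}$ is monotonically decreasing. Since this sequence is assumed to be bounded below, the monotone convergence theorem guarantees that it converges; I would denote its limit by $a$, which coincides with $\inf_{n\in\N}a_n$ and in particular satisfies $a\leq a_{n+1}$ for all $n\in\N$. This already settles the first assertion of the statement.

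For the summability, I would rewrite the recurrence as $b_n\leq a_n-a_{n+1}$ and sum over $n=0,\dots,N$. The right-hand side telescopes, giving $\sum_{n=0}^{N}b_n\leq a_0-a_{N+1}$. Using $a_{N+1}\geq a$, the partial sums are bounded above by $a_0-a$ uniformly in $N$; since the terms $b_n$ are nonnegative, the sequence of partial sums is nondecreasing and bounded, hence convergent, and one concludes $\sum_{n\in\N}b_n\leq a_0-a<+\infty$.

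There is no genuine obstacle here: the only observation worth isolating is that the recurrence, read as $b_n\leq a_n-a_{n+1}$, converts the summation into a telescope whose total is controlled by the already-established convergence of $(a_n)_{n\in\N}$. Every remaining step is a direct invocation of the monotone convergence theorem for real sequences.
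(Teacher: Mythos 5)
Your proof is correct and is exactly the standard argument: monotonicity from $b_n\geq 0$, convergence by the monotone convergence theorem, and summability via the telescoping bound $\sum_{n=0}^{N}b_n\leq a_0-a_{N+1}\leq a_0-a$. The paper itself omits a proof of this lemma, citing it as a classical tool from the Fej\'er monotonicity literature, and your argument is the one that is implicitly intended.
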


\begin{lemma}\label{fejer2} Let $(a_n)_{n\in\N}$ and $(b_n)_{n\in\N}$ be nonnegative real sequences, such that 
$\sum_{n\in\N}b_n<+\infty$ and $a_{n+1}\le a\cdot a_n+b\cdot a_{n-1} +b_n$ for all $n\ge 1$, where $a\in\R$, $b\ge 0$ and $a+b<1.$ Then $\sum_{n\in\N}a_n<+\infty.$
\end{lemma}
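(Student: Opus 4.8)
The plan is to convert the second-order recursive inequality into a statement about the partial sums of $(a_n)_{n\in\N}$ and then to exploit the contraction factor $a+b<1$ to bound these partial sums uniformly. Since $(a_n)_{n\in\N}$ is nonnegative, its summability is equivalent to the boundedness of the partial sums $T_N:=\sum_{n=0}^{N}a_n$, which form a nondecreasing sequence; this boundedness is what I aim to establish.

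First I would sum the inequality $a_{n+1}\le a\,a_n+b\,a_{n-1}+b_n$ over $n=1,\dots,N$. Reindexing the three sums on the right and expressing everything through the partial sums yields
\begin{equation*}
T_{N+1}-a_0-a_1\le a\,(T_N-a_0)+b\,T_{N-1}+\sum_{n=1}^{N}b_n.
\end{equation*}
Writing $B:=\sum_{n\in\N}b_n<+\infty$ and collecting the constants into $C:=a_0(1-a)+a_1+B$, this becomes $T_{N+1}\le a\,T_N+b\,T_{N-1}+C$. Observe that $a+b<1$ together with $b\ge0$ forces $a<1$, so that $1-a>0$ and hence $C\ge0$.

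The key step is to absorb $a\,T_N+b\,T_{N-1}$ into a multiple of $T_{N+1}$, using only the monotonicity $T_{N-1}\le T_N\le T_{N+1}$. The sign of $a$ makes this the one delicate point, and I expect it to be the main obstacle. If $a\ge0$, then $a\,T_N+b\,T_{N-1}\le(a+b)\,T_{N+1}$ at once. If $a<0$, then $a\,T_N\le a\,T_{N-1}$ (because $T_N\ge T_{N-1}$), so $a\,T_N+b\,T_{N-1}\le(a+b)\,T_{N-1}$, which is at most $(a+b)\,T_{N+1}$ when $a+b\ge0$ and at most $0$ when $a+b<0$. In each case one arrives either at $T_{N+1}\le(a+b)\,T_{N+1}+C$, giving $T_{N+1}\le C/(1-(a+b))$ since $1-(a+b)>0$, or directly at $T_{N+1}\le C$.

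Either way the partial sums are bounded above by a constant independent of $N$, and since $(T_N)_{N\in\N}$ is nondecreasing it converges; hence $\sum_{n\in\N}a_n<+\infty$. The argument is elementary, the only thing to watch being that $a$ may be negative, so the naive estimate $a\,T_N\le a\,T_{N+1}$ is not available and must be replaced by the case analysis above.
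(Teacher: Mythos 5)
Your argument is correct and complete. Note that the paper itself does not prove Lemma \ref{fejer2} at all; it only states it and refers to \cite{b-c-inertial-nonc-ts}, so there is no in-paper proof to compare against. Your route --- summing the recursion over $n=1,\dots,N$, rewriting everything in terms of the partial sums $T_N$, and then absorbing $a\,T_N+b\,T_{N-1}$ into $(a+b)\,T_{N+1}$ (or into $0$ when $a+b<0$) using monotonicity of $(T_N)$ and $1-(a+b)>0$ --- is a clean, self-contained, elementary proof. You correctly identified and handled the one delicate point, namely that $a$ may be negative, so that $a\,T_N\le a\,T_{N+1}$ is unavailable and must be replaced by $a\,T_N\le a\,T_{N-1}$; the resulting case analysis is exhaustive and each branch yields a bound on $T_{N+1}$ independent of $N$, which together with nonnegativity of $(a_n)$ gives summability.
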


\section{A forward-backward algorithm}

In this section we present an inertial forward-backward algorithm for a fully nonconvex optimization problem and study its convergence properties. The problem under investigation has the following formulation. \vspace{0.2cm}

\noindent {\bf Problem 1.} Let $f:\R^m\to (-\infty,+\infty]$ be a proper, lower semicontinuous function which is bounded below and let $g:\R^m\to \R$ be a Fr\'{e}chet 
differentiable function with Lipschitz continuous gradient, i.e. there exists $L_{\n g}\ge 0$ such that $\|\n g(x)-\n g(y)\|\le L_{\n g}\|x-y\|$ for all $x,y\in\R^m.$ We deal with the optimization problem 
\begin{equation}\label{opt-pb} (P) \ \inf_{x\in\R^m}[f(x)+g(x)]. \end{equation}

In the iterative scheme we propose below, we use  also the function 
$F:\R^m\to\R$, assumed to be $\s-$strongly convex, i.e. $F - \frac{\sigma}{2}\|\cdot\|^2$ is convex, Fr\'{e}chet differentiable and such that $\n F$ is $L_{\n F}$-Lipschitz continuous, where $\s,L_{\n F}>0$. 
The {\it Bregman distance} to $F$, denoted by $D_F:\R^m\times\R^m\to \R$, is defined as
$$D_F(x,y)=F(x)-F(y)-\<\n F(y),x-y\> \ \forall (x,y)\in\R^m\times\R^m.$$

Notice that the properties of the function $F$ ensure the following inequalities 
\begin{equation}\label{l0}
\frac{\s}{2}\|x-y\|^2\le D_F(x,y)\le \frac{L_{\n F}}{2}\|x-y\|^2 \ \forall x,y\in\R^m.
\end{equation}

We propose the following iterative scheme. \vspace{0.2cm}

\noindent{\bf Algorithm 1.} Chose $x_0,x_1\in\R^m$, $\ul\a,\ol\a > 0$, $\b\geq 0$ and the sequences 
$(\a_n)_{n\geq 1},(\b_n)_{n\geq 1}$ fulfilling $$0<\ul\a\leq\a_n\leq\ol\a \ \forall n\geq 1$$ and 
$$0\leq\b_n\leq\b \ \forall n\geq 1.$$
 Consider the iterative scheme  
\begin{equation}\label{alg}(\forall n\geq 1)\hspace{0.2cm}
x_{n+1}\in\argmin_{u\in \R^m}\left\{D_F(u,x_n)+\a_n\<u,\n g(x_n)\>+\b_n\<u,x_{n-1}-x_n\>+\a_n f(u)\right\}.\end{equation}

Due to the subdfferential sum formula mentioned in the previous section, one can see that the sequence generated by this algorithm satisfies the relation 
\begin{equation}\label{alg1}
x_{n+1}\in(\n F+\a_n\p f)^{-1}(\n F(x_n)-\a_n\n g(x_n)+\b_n(x_n-x_{n-1})) \ \forall n\geq 1. 
\end{equation}

Further, since $f$ is proper, lower semicontinuous and bounded from below and $D_F$ is coercive in its first argument 
(that is $\lim_{\|x\|\rightarrow+\infty}D_F(x,y)=+\infty$ for all $y\in\R^m$), the iterative scheme is well-defined, meaning 
that the existence of $x_n$ is guaranteed for each $n \geq 2$, since the objective function in the minimization problem to be solved at each iteration is coercive. 

\begin{remark}\label{prox-nevid} The condition that $f$ should be bounded below is imposed in order to ensure that in each iteration one can chose at least one $x_n$ 
(that is the $\argmin$ in \eqref{alg} is nonempty). One can replace this requirement by asking that the objective function in the minimization problem 
considered in \eqref{alg} is coercive and the theory presented below still remains valid. This observation is useful when dealing with optimization problems as the ones considered in Subsection 4.1. 
\end{remark}

Before proceeding with the convergence analysis, we discuss the relation of our scheme to other algorithms from the literature. Let us take first $F(x)=\frac{1}{2}\|x\|^2$ for all $x\in\R^m$. 
In this case $D_F(x,y)=\frac{1}{2}\|x-y\|^2$ for all $(x,y)\in\R^m\times\R^m$ and $\sigma=L_{\nabla F}=1$. The iterative scheme becomes 
\begin{equation}\label{alg2}(\forall n\geq 1)\hspace{0.2cm}x_{n+1}\in\argmin_{u\in \R^m}\left\{\frac{\|u-(x_n-\a_n\n g(x_n)+\b_n(x_n-x_{n-1}))\|^2}{2\a_n}+f(u)\right\}.
\end{equation}
A similar inertial type algorithm has been analyzed in \cite{ipiano}, however in the restrictive case when $f$ is convex. 
If we take in addition $\b=0$, which enforces $\b_n=0$ for all $n\geq 1$, then \eqref{alg2} becomes 
\begin{equation}\label{alg3}(\forall n\geq 1)\hspace{0.2cm}x_{n+1}\in\argmin_{u\in \R^m}\left\{\frac{\|u-(x_n-\a_n\n g(x_n))\|^2}{2\a_n}+f(u)\right\},
\end{equation}
the convergence of which has been investigated in \cite{b-sab-teb} in the full nonconvex setting. Notice that forward-backward algorithms  
with variable metrics for KL functions have been proposed in \cite{f-g-peyp, c-pesquet-r}. 

On the other hand, if we take $g(x)=0$ for all $x\in\R^m$, the iterative scheme in \eqref{alg2} becomes 
\begin{equation}\label{alg4} \ 
(\forall n\geq 1)\hspace{0.2cm}x_{n+1}\in\argmin_{u\in \R^m}\left\{\frac{\|u-(x_n+\b_n(x_n-x_{n-1}))\|^2}{2\a_n}+f(u)\right\},
\end{equation} which is a proximal point algorithm with inertial/memory effects formulated in the nonconvex setting designed for finding the critical points 
of $f$. The iterative scheme without the inertial term, that is when $\b=0$ and, so, $\b_n=0$ for all $n \geq 1$, has been considered in the context of KL functions in \cite{attouch-bolte2009}. 

Let us mention that in the full convex setting, which means that $f$ and $g$ are convex
functions, in which case for all $n\geq 2$, $x_n$ is uniquely determined and can be expressed via the {\it proximal operator} of $f$, \eqref{alg2} can be 
derived from the iterative scheme proposed in \cite{moudafi-oliny2003}, \eqref{alg3} is the classical forward-backward algorithm (see for example 
\cite{bauschke-book} or \cite{combettes}) and \eqref{alg4} has been analyzed in \cite{alvarez-attouch2001} in the more general context of 
monotone inclusion problems. 

In the convergence analysis of the algorithm the following result will be useful (see for example \cite[Lemma 1.2.3]{nes}).

\begin{lemma}\label{l3} Let $g:\R^m\to\R$ be Fr\'echet differentiable with $L_{\n g}$-Lipschitz continuous gradient. Then
$$g(y)\le g(x)+\<\n g(x),y-x\>+\frac{L_{\n g}}{2}\|y-x\|^2,\,\forall x,y\in\R^m.$$
\end{lemma}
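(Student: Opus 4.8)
The plan is to reduce the multidimensional statement to a one-dimensional integral along the segment joining $x$ and $y$, and then exploit the Lipschitz condition. First I would fix $x,y\in\R^m$ and introduce the auxiliary function $\phi:[0,1]\to\R$ defined by $\phi(t)=g(x+t(y-x))$. Since $g$ is Fr\'echet differentiable, the chain rule for the composition with the affine map $t\mapsto x+t(y-x)$ shows that $\phi$ is differentiable with $\phi'(t)=\<\n g(x+t(y-x)),y-x\>$, and the fundamental theorem of calculus yields
$$g(y)-g(x)=\phi(1)-\phi(0)=\int_0^1\<\n g(x+t(y-x)),y-x\>\,dt.$$

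Next I would subtract the linear term $\<\n g(x),y-x\>$, which equals $\int_0^1\<\n g(x),y-x\>\,dt$, to obtain
$$g(y)-g(x)-\<\n g(x),y-x\>=\int_0^1\<\n g(x+t(y-x))-\n g(x),y-x\>\,dt.$$
Then I would estimate the integrand by the Cauchy--Schwarz inequality followed by the $L_{\n g}$-Lipschitz continuity of $\n g$, observing that $\|(x+t(y-x))-x\|=t\|y-x\|$. This gives the pointwise bound $\<\n g(x+t(y-x))-\n g(x),y-x\>\le\|\n g(x+t(y-x))-\n g(x)\|\,\|y-x\|\le L_{\n g}\,t\,\|y-x\|^2$, and integrating using $\int_0^1 t\,dt=\tfrac12$ produces the claimed estimate $\tfrac{L_{\n g}}{2}\|y-x\|^2$.

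There is no genuine obstacle here: this is the standard descent lemma, and the argument is a routine one-dimensional computation. The only points requiring a modicum of care are the justification that $\phi$ is differentiable with the stated derivative and the integrability needed to apply the fundamental theorem of calculus, both of which are immediate since $\n g$ is continuous (being Lipschitz). I would expect the author's proof to follow precisely this line, as it is the classical derivation reproduced in \cite{nes}.
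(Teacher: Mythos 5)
Your argument is correct and complete: the reduction to the one-dimensional function $\phi(t)=g(x+t(y-x))$, the fundamental theorem of calculus, and the Cauchy--Schwarz plus Lipschitz estimate of the integrand give exactly the claimed bound. The paper itself offers no proof and simply cites \cite[Lemma 1.2.3]{nes}, whose proof is precisely this classical computation, so your approach coincides with the intended one.
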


\noindent Let us start now with the investigation of the convergence of the proposed algorithm. 

\begin{lemma}\label{l4} In the setting of Problem 1, let $(x_n)_{n\in\N}$ be the sequence generated by Algorithm 1. Then for every $\m>0$ one has
$$(f+g)(x_{n+1})+M_1\|x_n-x_{n+1}\|^2\le (f+g)(x_{n})+M_2\|x_{n-1}-x_{n}\|^2 \ \forall n\geq 1,$$
where 
\begin{equation}\label{M1M2}
M_1=\ds\frac{\s-\oa L_{\n g}}{2\oa}-\frac{\m\b}{2\ua} \ \mbox{and} \ M_2=\ds\frac{\b}{2\m\ua}.
\end{equation}

Moreover, for $\m>0$ and $\ua,\b$ satisfying
\begin{equation}\label{mu-sigma}\m(\s-L_{\n g}\ua)>\b(\m^2+1)\end{equation} one can chose $\ua < \oa$ such that $M_1>M_2$.
\end{lemma}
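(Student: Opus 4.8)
The plan is to exploit the fact that $x_{n+1}$ is a minimizer of the objective in \eqref{alg}, so its value at $u=x_{n+1}$ cannot exceed its value at $u=x_n$. Since $D_F(x_n,x_n)=0$, this comparison yields, after rearranging,
\[
\a_n f(x_{n+1})+D_F(x_{n+1},x_n)+\a_n\<x_{n+1}-x_n,\n g(x_n)\>+\b_n\<x_{n+1}-x_n,x_{n-1}-x_n\>\le \a_n f(x_n).
\]
This single inequality already encodes the whole mechanism; everything afterwards is a matter of converting the two cross terms into the squared norms appearing in $M_1$ and $M_2$.

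Next I would dispose of the gradient term by the descent inequality of Lemma \ref{l3} applied with $y=x_{n+1}$ and $x=x_n$, which gives $\a_n\<x_{n+1}-x_n,\n g(x_n)\>\ge \a_n\bigl(g(x_{n+1})-g(x_n)\bigr)-\a_n\frac{L_{\n g}}{2}\|x_{n+1}-x_n\|^2$. Substituting this and using the left inequality in \eqref{l0}, namely $D_F(x_{n+1},x_n)\ge\frac{\s}{2}\|x_{n+1}-x_n\|^2$, the $g$-values combine with the $f$-values into $(f+g)$ and, after dividing by $\a_n>0$, I obtain
\[
(f+g)(x_{n+1})+\Bigl(\frac{\s}{2\a_n}-\frac{L_{\n g}}{2}\Bigr)\|x_{n+1}-x_n\|^2\le (f+g)(x_n)+\frac{\b_n}{\a_n}\<x_n-x_{n+1},x_{n-1}-x_n\>.
\]

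The remaining cross term is handled by Young's inequality in the form $\<x_n-x_{n+1},x_{n-1}-x_n\>\le \frac{\m}{2}\|x_{n+1}-x_n\|^2+\frac{1}{2\m}\|x_{n-1}-x_n\|^2$, with $\m>0$ the free parameter. The placement of $\m$ must be exactly this, so that the coefficient of $\|x_{n+1}-x_n\|^2$ is penalized by $\m$ and that of $\|x_{n-1}-x_n\|^2$ by $1/\m$. Moving the $\|x_{n+1}-x_n\|^2$ contribution to the left and finally estimating the coefficients with the uniform bounds $\ua\le\a_n\le\oa$ and $0\le\b_n\le\b$, i.e. $\frac{\s}{2\a_n}\ge\frac{\s}{2\oa}$ and $\frac{\b_n}{\a_n}\le\frac{\b}{\ua}$, produces precisely $M_1=\frac{\s-\oa L_{\n g}}{2\oa}-\frac{\m\b}{2\ua}$ on the left and $M_2=\frac{\b}{2\m\ua}$ on the right. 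I expect this to be the most delicate step: one has to track the direction of every estimate so that each bound weakens the inequality in the correct sense, and the parameter $\m$ has to enter Young's inequality in the orientation above rather than the transposed one, otherwise $\m$ and $1/\m$ would land in the wrong constants.

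For the final assertion I would argue by continuity in $\oa$. Viewing $M_1$ as a function of $\oa$ alone (since $M_2$ does not depend on $\oa$), its limiting value as $\oa\downarrow\ua$ is $\frac{\s-\m\b-L_{\n g}\ua}{2\ua}$, and clearing denominators shows that $\frac{\s-\m\b-L_{\n g}\ua}{2\ua}>\frac{\b}{2\m\ua}=M_2$ is equivalent to $\m(\s-L_{\n g}\ua)>\b(\m^2+1)$, which is exactly hypothesis \eqref{mu-sigma}. Since $M_1$ is continuous and strictly decreasing in $\oa$, the strict inequality $M_1>M_2$ persists for all $\oa$ sufficiently close to $\ua$, and choosing any such $\oa>\ua$ establishes the claim.
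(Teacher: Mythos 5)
Your proof is correct and follows essentially the same route as the paper: compare the objective in \eqref{alg} at $u=x_{n+1}$ and $u=x_n$, apply Lemma \ref{l3}, the left inequality of \eqref{l0} and Young's inequality with parameter $\m$, then bound the coefficients via $\ua\le\a_n\le\oa$ and $\b_n\le\b$. The only (immaterial) difference is in the final claim, where you argue by continuity of $M_1$ as $\oa\downarrow\ua$ while the paper exhibits the explicit threshold $\oa<\frac{\m\ua\s}{L_{\n g}\m\ua+\b(\m^2+1)}$; both are valid.
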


\begin{proof} Let us consider $\m>0$ and fix $n\ge 1$. Due to \eqref{alg} we have 
$$D_F(x_{n+1},x_n)+\a_n\<x_{n+1},\n g(x_n)\>+\b_n\<x_{n+1},x_{n-1}-x_n\>+\a_n f(x_{n+1})\le$$
  $$D_F(x_n,x_n)+\a_n\<x_n,\n g(x_n)\>+\b_n\<x_n,x_{n-1}-x_n\>+\a_n f(x_n)$$
or, equivalently,
\begin{equation}\label{e2}
D_F(x_{n+1},x_n)+\<x_{n+1}-x_n,\a_n\n g(x_n)-\b_n(x_n-x_{n-1})\>+\a_n f(x_{n+1})\le \a_n f(x_{n}).
\end{equation}
On the other hand, by Lemma \ref{l3} we have
$$\<\n g(x_n),x_{n+1}-x_n\>\ge g(x_{n+1})-g(x_n)-\frac{L_{\n g}}{2}\|x_n-x_{n+1}\|^2.$$
At the same time
$$\<x_{n+1}-x_n,x_{n-1}-x_n\>\ge -\left(\frac{\m}{2}\|x_n-x_{n+1}\|^2+\frac{1}{2\m}\|x_{n-1}-x_n\|^2\right),$$
and from (\ref{l0}) we have
$$\frac{\s}{2}\|x_{n+1}-x_n\|^2\le D_F(x_{n+1},x_n).$$
Hence, $(\ref{e2})$ leads to
\begin{equation}\label{e3}
(f+g)(x_{n+1})+\frac{\s-L_{\n g}\a_n-\m\b_n}{2\a_n}\|x_{n+1}-x_n\|^2\le (f+g)(x_n)+\frac{\b_n}{2\m\a_n}\|x_{n-1}-x_n\|^2.
\end{equation}
Obviously $M_1=\frac{\s-L_{\n g}\oa}{2\oa}-\frac{\m\b}{2\ua}\le \frac{\s-L_{\n g}\a_n-\m\b_n}{2\a_n}$ and $M_2=\frac{\b}{2\m\ua}\ge \frac{\b_n}{2\m\a_n}$ thus,
$$(f+g)(x_{n+1})+M_1\|x_n-x_{n+1}\|^2\le (f+g)(x_{n})+M_2\|x_{n-1}-x_{n}\|^2$$
and the first part of the lemma is proved. 

Let now $\m>0$ and $\ua,\b$ be such that $\m(\s-L_{\n g}\ua)>\b(\m^2+1)$. Then 
$$\frac{\m\ua\s}{L_{\n g}\m\ua+\b(\m^2+1)}>\ua.$$
Let $$\ua<\oa<\frac{\m\ua\s}{L_{\n g}\m\ua+\b(\m^2+1)}.$$
Then $$\frac{1}{\oa}>\frac{L_{\n g}}{\s}+\frac{\b(\m^2+1)}{\m\ua\s}\Leftrightarrow \frac{\s-L_{\n g}\oa}{2\oa}>\frac{\b(\m^2+1)}{2\m\ua}\Leftrightarrow \frac{\s-L_{\n g}\oa}{2\oa}-\frac{\m\b}{2\ua}>\frac{\b}{2\m\ua}\Leftrightarrow$$
$$M_1>M_2$$ and the proof is complete. 
\end{proof}

\begin{proposition}\label{p1} In the setting of Problem 1, chose $\m,\ua, \b$ satisfying \eqref{mu-sigma}, $M_1, M_2$ satisfying \eqref{M1M2} and $\ua < \oa$ such that $M_1 > M_2$. Assume that $f+g$ is bounded from below.
Then the following statements hold:
\begin{itemize}
\item[(a)] $\sum_{n\geq 1}\|x_n-x_{n-1}\|^2<+\infty$;
\item[(b)] the sequence $((f+g)(x_n)+M_2\|x_{n-1}-x_n\|^2)_{n\geq 1}$ is monotonically decreasing and convergent; 
\item[(c)] the sequence $((f+g)(x_n))_{n\in\N}$ is convergent.
\end{itemize}
\end{proposition}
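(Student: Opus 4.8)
The plan is to collapse all three statements into a single application of Lemma \ref{fejer1} once the right Lyapunov-type quantity is identified. I would set, for $n\geq 1$,
$a_n := (f+g)(x_n)+M_2\|x_{n-1}-x_n\|^2$ and $b_n := (M_1-M_2)\|x_n-x_{n+1}\|^2$. Since the parameters are chosen so that $M_1>M_2$ (this is exactly what Lemma \ref{l4} guarantees under \eqref{mu-sigma}), we have $b_n\geq 0$. The key observation is that the sufficient-decrease estimate of Lemma \ref{l4} is precisely the telescoping inequality $a_{n+1}+b_n\leq a_n$: splitting $M_1\|x_n-x_{n+1}\|^2 = M_2\|x_n-x_{n+1}\|^2+(M_1-M_2)\|x_n-x_{n+1}\|^2$ on the left-hand side of the estimate regroups the first part into $a_{n+1}$ and leaves exactly $b_n$, while the right-hand side is $a_n$ by definition.

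First I would verify that $(a_n)_{n\geq 1}$ is bounded below. Because $f+g$ is assumed bounded from below and $M_2\|x_{n-1}-x_n\|^2\geq 0$, we get $a_n\geq \inf_{x\in\R^m}(f+g)(x)>-\infty$ for every $n$. Thus the hypotheses of Lemma \ref{fejer1} are satisfied, and that lemma directly gives two conclusions: $(a_n)_{n\geq 1}$ is monotonically decreasing and convergent, which is statement (b), and $\sum_{n\geq 1}b_n<+\infty$.

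For (a), since $M_1-M_2>0$ the finiteness of $\sum_{n\geq 1}b_n=(M_1-M_2)\sum_{n\geq 1}\|x_n-x_{n+1}\|^2$ yields $\sum_{n\geq 1}\|x_n-x_{n+1}\|^2<+\infty$, and a shift of the index together with the single finite term $\|x_1-x_0\|^2$ gives $\sum_{n\geq 1}\|x_n-x_{n-1}\|^2<+\infty$. In particular the summands tend to zero, so $M_2\|x_{n-1}-x_n\|^2\to 0$. For (c) I would then simply write $(f+g)(x_n)=a_n-M_2\|x_{n-1}-x_n\|^2$: the right-hand side is a convergent sequence minus a null sequence, hence $((f+g)(x_n))_{n\in\N}$ converges (to the same limit as $(a_n)$). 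I do not expect any genuine obstacle here, since the whole argument is a repackaging of Lemma \ref{l4} into the format of Lemma \ref{fejer1}; the only point requiring a little care is the bookkeeping in verifying $a_{n+1}+b_n\leq a_n$ and using $M_1>M_2$ both to make $b_n$ nonnegative and to recover the square-summability in (a).
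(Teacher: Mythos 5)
Your proposal is correct and follows essentially the same route as the paper: the paper's proof likewise defines $a_n=(f+g)(x_n)+M_2\|x_{n-1}-x_n\|^2$ and $b_n=(M_1-M_2)\|x_n-x_{n+1}\|^2$, reads Lemma \ref{l4} as $a_{n+1}+b_n\le a_n$, and invokes Lemma \ref{fejer1}. The only difference is that you spell out the routine deductions of (a) and (c) from the conclusion of Lemma \ref{fejer1}, which the paper leaves implicit.
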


\begin{proof} For every $n\geq 1$, set $a_n=(f+g)(x_n)+M_2\|x_{n-1}-x_n\|^2$ and $ b_n=(M_1-M_2)\|x_n-x_{n+1}\|^2.$ Then obviously from Lemma \ref{l4} one has 
for every $n\geq 1$ 
$$a_{n+1}+b_n=(f+g)(x_{n+1})+M_1\|x_n-x_{n+1}\|^2\le (f+g)(x_n)+M_2\|x_{n-1}-x_n\|^2=a_n.$$
The conclusion follows now from Lemma \ref{fejer1}. 
\end{proof}

\begin{lemma}\label{subdiff} In the setting of Problem 1, consider the sequences generated by Algorithm 1. For every $n \geq 1$ we have
\begin{equation}\label{y-n-in-subdiff}y_{n+1}\in\partial (f+g)(x_{n+1}),\end{equation}
where 
$$y_{n+1} = \frac{\n F(x_{n})-\n F(x_{n+1})}{\a_{n}}+\n g(x_{n+1})-\n g(x_{n})+\frac{\b_{n}}{\a_{n}}(x_{n}-x_{n-1}).$$
Moreover, 
\begin{equation}\label{ineq-y_n}\|y_{n+1}\|\leq \frac{L_{\n F}+\a_{n}L_{\n g}}{\a_{n}}\|x_{n}-x_{n+1}\|+\frac{\b_n}{\a_n}\|x_{n}-x_{n-1}\| \ \forall n\geq 1\end{equation}
\end{lemma}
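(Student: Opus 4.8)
The plan is to read off the first-order optimality condition for the subproblem defining $x_{n+1}$ and then strip away the smooth terms using the subdifferential sum rule. Fix $n\ge 1$ and view the objective in \eqref{alg} as a function of $u$, written as $\a_n f(u)$ plus the continuously differentiable function
$$h_n(u)=D_F(u,x_n)+\a_n\<u,\n g(x_n)\>+\b_n\<u,x_{n-1}-x_n\>.$$
Since $x_{n+1}$ is a global (hence local) minimizer of this objective, the Fermat rule for the limiting subdifferential gives $0\in\partial\big(\a_n f+h_n\big)(x_{n+1})$.

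Next I would invoke the rule $\partial(f+h)=\partial f+\n h$ valid for a continuously differentiable summand. Here $\n h_n(u)=\n F(u)-\n F(x_n)+\a_n\n g(x_n)+\b_n(x_{n-1}-x_n)$, because $D_F(\cdot,x_n)$ has gradient $\n F(\cdot)-\n F(x_n)$ and the two inner-product terms are linear in $u$. Evaluating at $x_{n+1}$ yields
$$0\in \n F(x_{n+1})-\n F(x_n)+\a_n\n g(x_n)+\b_n(x_{n-1}-x_n)+\a_n\p f(x_{n+1}).$$
Dividing by $\a_n>0$, isolating $\p f(x_{n+1})$, and then adding $\n g(x_{n+1})$ to both sides — which by the same sum rule turns $\p f(x_{n+1})$ into $\p(f+g)(x_{n+1})$ — I would rearrange the resulting expression, using $-\tfrac{\b_n}{\a_n}(x_{n-1}-x_n)=\tfrac{\b_n}{\a_n}(x_n-x_{n-1})$, to obtain exactly the vector $y_{n+1}$ of the statement. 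This establishes \eqref{y-n-in-subdiff}.

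The bound \eqref{ineq-y_n} then follows by applying the triangle inequality to the three summands of $y_{n+1}$ and estimating the first two with the Lipschitz continuity of $\n F$ and $\n g$, namely $\|\n F(x_n)-\n F(x_{n+1})\|\le L_{\n F}\|x_n-x_{n+1}\|$ and $\|\n g(x_{n+1})-\n g(x_n)\|\le L_{\n g}\|x_n-x_{n+1}\|$; collecting the two terms in $\|x_n-x_{n+1}\|$ over the common denominator $\a_n$ produces the coefficient $(L_{\n F}+\a_n L_{\n g})/\a_n$, while the inertial term contributes $\tfrac{\b_n}{\a_n}\|x_n-x_{n-1}\|$.

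I expect no genuine obstacle: the lemma is essentially a bookkeeping consequence of the optimality condition. The only point meriting care is the legitimacy of the limiting-subdifferential sum rule, which applies precisely because $F$, $g$, and the two inner-product terms are continuously differentiable, so that adding each of them merely translates the subdifferential of $\a_n f$ by the corresponding gradient, with no set-valued subtleties entering the computation.
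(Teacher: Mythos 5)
Your proposal is correct and follows essentially the same route as the paper: the paper simply cites the already-derived optimality relation \eqref{alg1} (which was itself obtained from the Fermat rule and the sum rule exactly as you spell out), rewrites it as $y_{n+1}-\n g(x_{n+1})\in\p f(x_{n+1})$, and concludes with the same triangle-inequality and Lipschitz estimates. Your only addition is making the derivation of \eqref{alg1} explicit, which is harmless.
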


\begin{proof} Let us fix $n\geq 1$. From (\ref{alg1}) we have that
$$\frac{\n F(x_{n})-\n F(x_{n+1})}{\a_{n}}-\n g(x_{n})+\frac{\b_{n}}{\a_{n}}(x_{n}-x_{n-1})\in\p f(x_{n+1}),$$
or, equivalently,
$$y_{n+1}-\n g(x_{n+1})\in \p f(x_{n+1}),$$
which shows that $y_{n+1}\in\p(f+g)(x_{n+1}).$

The inequality \eqref{ineq-y_n} follows now from the definition of $y_{n+1}$ and the triangle inequality.  
\end{proof}

\begin{lemma}\label{l5} In the setting of Problem 1, chose $\m,\ua, \b$ satisfying \eqref{mu-sigma}, $M_1, M_2$ satisfying \eqref{M1M2} and $\ua < \oa$ such that $M_1 > M_2$. Assume that $f+g$ is coercive, i.e.
$$\lim_{\|x\|\to+\infty}(f+g)(x)=+\infty.$$
Then the sequence $(x_n)_{n\in\N}$ generated by Algorithm 1 has a subsequence convergent to a critical point of $f+g.$ Actually every cluster point 
of $(x_n)_{n\in\N}$ is a critical point of $f+g.$
\end{lemma}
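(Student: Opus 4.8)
The plan is to combine the summability from Proposition \ref{p1} with the coercivity assumption to extract a convergent subsequence, and then use the closedness of the limiting subdifferential together with Lemma \ref{subdiff} to identify the limit as a critical point. First I would show that the whole sequence $(x_n)_{n\in\N}$ is bounded: by Proposition \ref{p1}(c) the sequence $((f+g)(x_n))_{n\in\N}$ is convergent, hence bounded above by some constant; coercivity of $f+g$ then forces the sublevel sets to be bounded, so $(x_n)_{n\in\N}$ lies in a bounded set and admits at least one cluster point. This gives the existence of a convergent subsequence.

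Next I would record the key consequence of Proposition \ref{p1}(a), namely that $\|x_n-x_{n-1}\|\to 0$ as $n\to+\infty$ (since the series $\sum_{n\geq 1}\|x_n-x_{n-1}\|^2$ converges, its terms tend to zero). This is the crucial gap-vanishing fact. Now let $\ol x$ be an arbitrary cluster point, and pick a subsequence $(x_{n_k})_{k\in\N}$ with $x_{n_k}\to\ol x$. From the inequality \eqref{ineq-y_n} in Lemma \ref{subdiff}, together with $\|x_n-x_{n-1}\|\to 0$ and the uniform bounds $\ua\leq\a_n\leq\oa$, $0\leq\b_n\leq\b$, I would conclude that $\|y_{n+1}\|\to 0$, so in particular $y_{n_k+1}\to 0$. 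By Lemma \ref{subdiff} we have $y_{n_k+1}\in\partial(f+g)(x_{n_k+1})$.

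To apply the closedness criterion for the graph of the limiting subdifferential stated in the Preliminaries, I need three ingredients along the subsequence: the argument converges, the subgradient converges, and the function values converge. The argument point of the subdifferential is $x_{n_k+1}$, so I would first check that $x_{n_k+1}\to\ol x$ as well; this follows from $x_{n_k}\to\ol x$ and $\|x_{n_k+1}-x_{n_k}\|\to 0$. The subgradient $y_{n_k+1}\to 0$ by the previous paragraph. The remaining and most delicate point is the value convergence $(f+g)(x_{n_k+1})\to(f+g)(\ol x)$. I expect this to be the main obstacle, since $f$ is only lower semicontinuous, so lower semicontinuity alone gives $\liminf (f+g)(x_{n_k+1})\geq(f+g)(\ol x)$ but not equality.

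To establish the value convergence I would argue as follows. Lower semicontinuity of $f+g$ yields $(f+g)(\ol x)\leq\liminf_{k}(f+g)(x_{n_k+1})$. For the reverse inequality I would exploit the defining minimality in \eqref{alg}: testing the $\argmin$ characterizing $x_{n_k+1}$ against the competitor $u=\ol x$ produces an upper bound on the objective at $x_{n_k+1}$, and after passing to the limit using $x_{n_k}\to\ol x$, $\|x_{n_k+1}-x_{n_k}\|\to 0$, the continuity of $\n F$, $\n g$, $F$, and the already-known convergence of $D_F$-type terms, one obtains $\limsup_k (f+g)(x_{n_k+1})\leq(f+g)(\ol x)$. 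Combining the two bounds gives $\lim_k(f+g)(x_{n_k+1})=(f+g)(\ol x)$. With all three hypotheses of the closedness criterion verified, I conclude $0\in\partial(f+g)(\ol x)$, i.e. $\ol x\in\crit(f+g)$. Since $\ol x$ was an arbitrary cluster point, this proves the final assertion, and the existence of at least one such critical point follows from the boundedness established at the start.
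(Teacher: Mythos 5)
Your proposal is correct and follows essentially the same route as the paper: boundedness from coercivity plus the decrease property, gap-vanishing from Proposition \ref{p1}(a), the vanishing subgradients $y_{n}$ from Lemma \ref{subdiff}, the $\limsup$ bound on function values obtained by testing the $\argmin$ in \eqref{alg} against the cluster point, and the closedness of the graph of the limiting subdifferential. The only (harmless) difference is that you run the closedness criterion along $x_{n_k+1}$ while the paper shifts indices and works directly with $x_{n_k}$.
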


\begin{proof} Since $f+g$ is a proper, lower semicontinuous and coercive function, it follows that $\inf_{x\in\R^m}[f(x)+g(x)]$ is finite and the infimum is attained. 
Hence $f+g$ is bounded from below. 

(i) According to Proposition \ref{p1}(b), we have $$(f+g)(x_n)\leq (f+g)(x_n)+M_2\|x_n-x_{n-1}\|^2\leq (f+g)(x_1)+M_2\|x_1-x_0\|^2 \ \forall n\geq 1.$$
Since the function $f+g$ is coercive, its lower level sets are bounded, thus the sequence  $(x_n)_{n\in\N}$ is bounded. 

Let $x$ be a cluster point of $(x_n)_{n\in\N}$. Then there exists a subsequence $(x_{n_k})_{k\in\N}$ such that $x_{n_k}\rightarrow \ol x$ as $k\rightarrow+\infty$. 
We show that $(f+g)(x_{n_k})\to (f+g)(x)$ as $k\to+\infty$ and that $x$ is a critical point 
of $f+g$, that is $0\in\p(f+g)(x).$

We show first that $f(x_{n_k})\to f(x)$ as $k\to+\infty.$
Since $f$ is lower semicontinuous one has
$$\liminf_{k\to+\infty}f(x_{n_k})\ge f(x).$$
On the other hand, from (\ref{alg}) we have for every $n \geq 1$
\begin{align*}
D_F(x_{n+1},x_n)+\a_n\<x_{n+1},\n g(x_n)\>+\b_n\<x_{n+1},x_{n-1}-x_n\>+\a_n f(x_{n+1}) & \le\\
D_F(x,x_n)+\a_n\<x,\n g(x_n)\>+\b_n\<x,x_{n-1}-x_n\>+\a_n f(x), &
\end{align*}
which leads to
\begin{align*}  
\frac{1}{\a_{n_k-1}}\left(D_F(x_{n_k},x_{n_k-1})-D_F(x,x_{n_k-1}) \right) + &\\
\frac{1}{\a_{n_k-1}}\left(\<x_{n_k}-x,\a_{n_k-1}\n g(x_{n_k-1})-\b_{n_k-1}(x_{n_k-1}-x_{n_k-2})\>\right)+&\\
f(x_{n_k}) & \le f(x) \ \forall k\geq 2.
\end{align*}
The latter combined with Proposition \ref{p1}(a) and \eqref{l0} shows that $\limsup_{k\to+\infty}f(x_{n_k})\le f(x)$, 
hence $\lim_{k\to +\infty} f(x_{n_k})=f(x).$ Since $g$ is continuous, obviously $g(x_{n_k})\to g(x)$ as $k\to+\infty,$ thus 
$(f+g)(x_{n_k})\to (f+g)(x)$ as $k\to+\infty.$

Further, by using the notations from Lemma \ref{subdiff}, we have $y_{n_k}\in\p f(x_{n_k})$ for every $k \geq 2$. 
By Proposition \ref{p1}(a) and Lemma \ref{subdiff} we get $y_{n_k}\to 0$ as $k\to+\infty.$

Concluding, we have:
$$y_{n_k}\in\p (f+g)(x_{n_k}) \ \forall k \geq 2,$$
$$(x_{n_k},y_{n_k})\to (x,0),\,k\to +\infty$$
$$(f+g)(x_{n_k})\to (f+g)(x),\,k\to +\infty.$$
Hence $0\in\p(f+g)(x),$ that is, $x$ is a critical point of $f+g.$
\end{proof}

\begin{lemma}\label{l6} In the setting of Problem 1, chose $\m,\ua, \b$ satisfying \eqref{mu-sigma}, $M_1, M_2$ satisfying \eqref{M1M2} and $\ua < \oa$ such that $M_1 > M_2$. Assume that $f+g$ is coercive and
consider the function
$$H:\R^m\times\R^m\to(-\infty,+\infty],\, H(x,y)=(f+g)(x)+M_2\|x-y\|^2 \ \forall (x,y)\in\R^m\times\R^m.$$
Let $(x_n)_{n\in\N}$ be the sequence generated by Algorithm 1. Then there exist $M,N>0$ such that the following statements hold:
\begin{itemize}
\item[($H_1$)] $H(x_{n+1},x_{n})+M\|x_{n+1}-x_{n}\|^2\le H(x_{n},x_{n-1})$ for all $n\geq 1$; 
\item[($H_2$)] for all $n\geq 1,$ there exists $w_{n+1}\in \p H(x_{n+1},x_n)$ such that $\|w_{n+1}\|\le N(\|x_{n+1}-x_n\|+\|x_n-x_{n-1}\|)$;
\item[($H_3$)] if $(x_{n_k})_{k\in\N}$ is a subsequence such that $x_{n_k}\to x$ as $k\to+\infty$, then $H(x_{n_k}, x_{n_k-1})\to H(x,x)$ as $k\to+\infty$ 
(there exists at least one subsequence with this property). 
\end{itemize}
\end{lemma}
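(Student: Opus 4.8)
The plan is to establish the three properties $(H_1)$, $(H_2)$, $(H_3)$ for the auxiliary function $H$ by leveraging the lemmas already proven for the sequence $(x_n)_{n\in\N}$. The key observation is that $H(x_{n+1},x_n) = (f+g)(x_{n+1}) + M_2\|x_{n+1}-x_n\|^2$, so $H$ evaluated along consecutive iterates is exactly the sequence $a_{n+1}$ from the proof of Proposition \ref{p1}. This reduces each of the three claims to a suitable reinterpretation of earlier results.

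For $(H_1)$, I would start directly from Lemma \ref{l4}, which gives
$$(f+g)(x_{n+1})+M_1\|x_n-x_{n+1}\|^2\le (f+g)(x_{n})+M_2\|x_{n-1}-x_{n}\|^2.$$
Rewriting this in terms of $H$, it reads $H(x_{n+1},x_n) + (M_1-M_2)\|x_{n+1}-x_n\|^2 \le H(x_n,x_{n-1})$, so the choice $M := M_1 - M_2 > 0$ (which is available precisely because we chose the parameters so that $M_1 > M_2$) settles $(H_1)$ immediately. For $(H_3)$, the existence of a convergent subsequence follows from the boundedness of $(x_n)_{n\in\N}$ established in Lemma \ref{l5}. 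Given such a subsequence $x_{n_k}\to x$, Lemma \ref{l5} already shows $(f+g)(x_{n_k})\to(f+g)(x)$, and Proposition \ref{p1}(a) forces $\|x_{n_k}-x_{n_k-1}\|\to 0$; hence the penalty term $M_2\|x_{n_k}-x_{n_k-1}\|^2$ vanishes in the limit and $H(x_{n_k},x_{n_k-1}) = (f+g)(x_{n_k}) + M_2\|x_{n_k}-x_{n_k-1}\|^2 \to (f+g)(x) + 0 = H(x,x)$.

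The most substantive step is $(H_2)$, which requires producing an explicit subgradient $w_{n+1}\in\partial H(x_{n+1},x_n)$ with the right norm bound. Here I would compute the limiting subdifferential of $H$ as a function on the product space $\R^m\times\R^m$. Since $H(x,y)=(f+g)(x)+M_2\|x-y\|^2$ is the sum of $(f+g)$ (acting on the first variable) and the continuously differentiable function $(x,y)\mapsto M_2\|x-y\|^2$, the subdifferential sum rule quoted in the Preliminaries gives
$$\partial H(x,y) = \big(\partial(f+g)(x) + 2M_2(x-y)\big)\times\{-2M_2(x-y)\}.$$
The natural candidate is therefore $w_{n+1} = \big(y_{n+1} + 2M_2(x_{n+1}-x_n),\, -2M_2(x_{n+1}-x_n)\big)$, where $y_{n+1}\in\partial(f+g)(x_{n+1})$ is exactly the element produced in Lemma \ref{subdiff}. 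The norm bound then follows by combining the estimate \eqref{ineq-y_n} on $\|y_{n+1}\|$ with the triangle inequality: $\|y_{n+1}\|$ is controlled by $\frac{L_{\n F}+\oa L_{\n g}}{\ua}\|x_{n+1}-x_n\| + \frac{\b}{\ua}\|x_n-x_{n-1}\|$ using the bounds $\ua\le\a_n\le\oa$ and $\b_n\le\b$, and the two extra terms involving $2M_2(x_{n+1}-x_n)$ only contribute further multiples of $\|x_{n+1}-x_n\|$. Collecting coefficients yields a constant $N>0$ for which $\|w_{n+1}\|\le N(\|x_{n+1}-x_n\|+\|x_n-x_{n-1}\|)$.

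I expect the main obstacle to be purely bookkeeping in $(H_2)$: one must be careful that the subdifferential is computed on the product space and that the partial gradient of $M_2\|x-y\|^2$ contributes to both coordinate blocks, so the $y$-component of $w_{n+1}$ must not be overlooked when bounding $\|w_{n+1}\|$. Beyond tracking these coefficients, no genuine difficulty arises, since every analytic ingredient — the descent estimate, the subgradient construction, the summability $\sum\|x_n-x_{n-1}\|^2<+\infty$, and the boundedness of the iterates — is already in hand from Lemma \ref{l4}, Lemma \ref{subdiff}, and Proposition \ref{p1}.
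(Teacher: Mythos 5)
Your proposal is correct and follows essentially the same route as the paper's own proof: $(H_1)$ via $M=M_1-M_2$ and Lemma \ref{l4}, $(H_2)$ via the product-space subdifferential formula and the subgradient $y_{n+1}$ from Lemma \ref{subdiff} (your $-2M_2(x_{n+1}-x_n)$ in the second block is the paper's $2M_2(x_n-x_{n+1})$), and $(H_3)$ via the function-value convergence from the proof of Lemma \ref{l5} together with Proposition \ref{p1}(a). No substantive differences.
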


\begin{proof} For ($H_1$) just take $M=M_1-M_2$ and the conclusion follows from Lemma \ref{l4}.

Let us prove ($H_2$). For every $n\geq 1$ we define $$w_{n+1}=(y_{n+1}+2M_2(x_{n+1}-x_n),2M_2(x_n-x_{n+1})),$$
where $(y_n)_{n \geq 2}$ is the sequence introduced in Lemma \ref{subdiff}. 
The fact that $w_{n+1}\in \p H(x_{n+1},x_n)$ follows from Lemma \ref{subdiff} and the relation
\begin{equation}\label{H-subdiff}\partial H(x,y)=\big(\partial (f+h)(x)+2M_2(x-y)\big)\times \{2M_2(y-x)\} \ \forall (x,y)\in\R^m\times\R^m.\end{equation}

Further, one has (see also Lemma \ref{subdiff}) $$\|w_{n+1}\|\le \|y_{n+1}+2M_2(x_{n+1}-x_n)\|+\|2M_2(x_n-x_{n+1})\|\le $$
$$\left(\frac{L_{\n F}}{\a_n}+L_{\n g}+4M_2\right)\|x_{n+1}-x_n\|+\frac{\b_n}{\a_n}\|x_n-x_{n-1}\|.$$
Since $0<\ua\le\a_n\le\oa$ and $0\le\b_n\le\b$ for all $n\geq 1$, one can chose 
$$N=\sup_{n\geq 1}\left\{\frac{L_{\n F}}{\a_n}+L_{\n g}+4M_2, \frac{\b_n}{\a_n}\right\}<+\infty$$ and the conclusion follows.

For ($H_3$), consider $(x_{n_k})_{k\in\N}$ a subsequence such that $x_{n_k}\to x$ as $k\to+\infty$. 
We have shown in the proof of Lemma \ref{l5} that $(f+g)(x_{n_k})\to (f+g)(x)$ as $k\to+\infty.$ From Proposition \ref{p1}(a) and 
the definition of $H$ we easily derive that $H(x_{n_k}, x_{n_k-1})\to H(x,x)=(f+g)(x)$ as $k\to+\infty.$ The existence of such a sequence follows from 
Lemma \ref{l5}.
\end{proof}

In the following we denote by $\omega((x_n)_{n\in\N})$ the set of cluster points of the sequence $(x_n)_{n\in\N}$.

\begin{lemma}\label{l7}  In the setting of Problem 1, chose $\m,\ua, \b$ satisfying \eqref{mu-sigma}, $M_1, M_2$ satisfying \eqref{M1M2} and $\ua < \oa$ such that $M_1 > M_2$. Assume that $f+g$ is coercive and
consider the function
$$H:\R^m\times\R^m\to(-\infty,+\infty],\, H(x,y)=(f+g)(x)+M_2\|x-y\|^2 \ \forall (x,y)\in\R^m\times\R^m.$$
Let $(x_n)_{n\in\N}$ be the sequence generated by Algorithm 1. Then the following statements are true:
\begin{itemize}
\item[(a)] $\omega((x_n,x_{n-1})_{n\geq 1})\subseteq \crit(H)=\{(x,x)\in\R^m\times\R^m:x\in \crit(f+g)\}$; 
\item[(b)] $\lim_{n\to\infty}\dist((x_n,x_{n-1}),\omega((x_n,x_{n-1}))_{n\geq 1})=0$;
\item[(c)] $\omega((x_n,x_{n-1})_{n\geq 1})$ is nonempty, compact and connected;
\item[(d)] $H$ is finite and constant on $\omega((x_n,x_{n-1})_{n\geq 1}).$
\end{itemize}
\end{lemma}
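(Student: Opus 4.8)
The plan is to handle the four assertions in sequence, relying throughout on the two facts that underpin the analysis: $\|x_n-x_{n-1}\|\to 0$ (a consequence of Proposition \ref{p1}(a)) and the boundedness of $(x_n)_{n\in\N}$ (shown in Lemma \ref{l5} via coercivity of $f+g$). I would first settle the identity for $\crit(H)$: by the subdifferential formula \eqref{H-subdiff}, a pair $(x,y)$ lies in $\crit(H)$ exactly when $2M_2(y-x)=0$ and $0\in\p(f+g)(x)+2M_2(x-y)$; since $M_2>0$ the first equation forces $y=x$ and the second then reduces to $0\in\p(f+g)(x)$, giving $\crit(H)=\{(x,x):x\in\crit(f+g)\}$. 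Statement (a) is then immediate: for $(x^*,y^*)\in\omega((x_n,x_{n-1})_{n\geq 1})$ with a subsequence $(x_{n_k},x_{n_k-1})\to(x^*,y^*)$, passing to the limit in $\|x_{n_k}-x_{n_k-1}\|\to 0$ forces $x^*=y^*$, while $x_{n_k}\to x^*$ exhibits $x^*$ as a cluster point of $(x_n)_{n\in\N}$, hence a critical point of $f+g$ by Lemma \ref{l5}.

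The nonemptiness and compactness claims in (c), together with statement (b), are purely topological consequences of boundedness: the pairs $(x_n,x_{n-1})$ stay in a compact set, so the cluster set is nonempty and, being the set of subsequential limits of a bounded sequence, closed and bounded, hence compact. Assertion (b) follows by a standard contradiction argument, since any subsequence remaining a fixed distance $\varepsilon>0$ from the cluster set would admit a convergent sub-subsequence whose limit would have to lie in that very set.

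The connectedness in (c) is the one genuinely delicate point and the step I expect to be the main obstacle. Writing $z_n=(x_n,x_{n-1})$, the crucial input is $\|z_{n+1}-z_n\|^2=\|x_{n+1}-x_n\|^2+\|x_n-x_{n-1}\|^2\to 0$, again supplied by Proposition \ref{p1}(a). I would then invoke the standard fact that the cluster set of a bounded sequence with vanishing consecutive gaps is connected; the proof is by contradiction, splitting a hypothetical disconnection into two disjoint nonempty compact pieces separated by a positive distance and noting that the vanishing steps eventually prevent the sequence from crossing the gap, so that it can cluster in only one of the pieces.

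Finally, for (d), I would use that $((f+g)(x_n))_{n\in\N}$ converges by Proposition \ref{p1}(c), say to $\ell$, whence $H(x_n,x_{n-1})=(f+g)(x_n)+M_2\|x_n-x_{n-1}\|^2\to\ell$. For any cluster point $(x^*,x^*)$, property $(H_3)$ of Lemma \ref{l6} gives $H(x_{n_k},x_{n_k-1})\to H(x^*,x^*)$ along a suitable subsequence; since the full sequence $H(x_n,x_{n-1})$ already converges to $\ell$, this forces $H(x^*,x^*)=\ell$, so $H$ is finite and constant, with value $\ell$, on the cluster set.
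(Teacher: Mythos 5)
Your proposal is correct and follows essentially the same route as the paper: part (a) from $\|x_n-x_{n-1}\|\to 0$, Lemma \ref{l5} and the subdifferential formula \eqref{H-subdiff}; parts (b) and (c) from the generic topological facts about bounded sequences with vanishing consecutive gaps; and part (d) from the convergence of $((f+g)(x_n))_{n\in\N}$ combined with property $(H_3)$ of Lemma \ref{l6}. The only difference is that you sketch the proofs of the generic facts in (b) and (c) directly, whereas the paper simply cites \cite[Lemma 5 and Remark 5]{b-sab-teb} for them.
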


\begin{proof} (a) According to Lemma \ref{l5} and Proposition \ref{p1}(a) we have 
$\omega((x_n,x_{n-1})_{n\geq 1})\subseteq \{(x,x)\in\R^m\times\R^m:x\in \crit(f+g)\}.$ The equality $\crit(H)=\{(x,x)\in\R^m\times\R^m:x\in \crit(f+g)\}$ 
follows from \eqref{H-subdiff}. 

(b) and (c) can be shown as in \cite[Lemma 5]{b-sab-teb}, by also taking into consideration \cite[Remark 5]{b-sab-teb}, where it is noticed 
that the properties (b) and (c) are generic for sequences satisfying $x_{n+1}-x_n\rightarrow 0$ as $n\rightarrow+\infty$.

(d) According to  Proposition \ref{p1}, the sequence $((f+g)(x_n))_{n\in\N}$ is convergent, i.e. $\lim_{n\to+\infty}(f+g)(x_n)=l\in\R.$ Take an arbitrary 
$(x,x)\in\omega((x_n,x_{n-1})_{n\geq 1})$, where $x\in\crit(f+g)$ (we took statement (a) into consideration). From Lemma \ref{l6}($H_3$) it follows that there exists a 
subsequence $(x_{n_k})_{k\in\N}$ such that $x_{n_k}\to x$ as $k\to+\infty$ and 
$H(x_{n_k}, x_{n_k-1})\to H(x,x)$ as $k\to+\infty$. Moreover, from Proposition \ref{p1} one has 
$H(x,x)=\lim_{k\to+\infty}H(x_{n_k}, x_{n_k-1})=\lim_{k\to+\infty}(f+g)(x_{n_k})+M_2\|x_{n_k}-x_{n_k-1}\|^2=l$ and the conclusion follows.
\end{proof}

We give now the main result concerning the convergence of the whole sequence $(x_n)_{n\in\N}$. 

\begin{theorem}\label{t1} In the setting of Problem 1, chose $\m,\ua, \b$ satisfying \eqref{mu-sigma}, $M_1, M_2$ satisfying \eqref{M1M2} and $\ua < \oa$ such that $M_1 > M_2$. Assume that $f+g$ is coercive and that
$$H:\R^m\times\R^m\to(-\infty,+\infty],\, H(x,y)=(f+g)(x)+M_2\|x-y\|^2 \ \forall (x,y)\in\R^m\times\R^m$$ 
is a KL function. Let $(x_n)_{n\in\N}$ be the sequence generated by Algorithm 1. Then the following statements are true:\begin{itemize}
 \item[(a)] $\sum_{n\in\N}\|x_{n+1}-x_n\|<+\infty$;
 \item[(b)] there exists $x\in\crit(f+g)$ such that $\lim_{n\rightarrow+\infty}x_n=x$.
\end{itemize}
\end{theorem}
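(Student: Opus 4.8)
The plan is to follow the by-now standard three-ingredient scheme for KL-based convergence (as in \cite{b-sab-teb, att-b-sv2013}), applied to the auxiliary function $H$ rather than to $f+g$ directly, exploiting the descent property $(H_1)$, the subgradient bound $(H_2)$ and the continuity property $(H_3)$ established in Lemma \ref{l6}, together with the structure of the cluster-point set from Lemma \ref{l7}. Throughout I write $z_n=(x_n,x_{n-1})$ and $\Omega=\omega((x_n,x_{n-1})_{n\geq 1})$, and I denote by $l$ the common value of $H$ on $\Omega$, which exists by Lemma \ref{l7}(d); by Proposition \ref{p1} together with $(H_1)$ the sequence $(H(z_n))_{n\geq 1}$ is nonincreasing and converges to $l$. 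I would first dispose of the trivial case: if $H(z_{n_0})=l$ for some $n_0$, then monotonicity forces $H(z_n)=l$ for all $n\geq n_0$, and $(H_1)$ yields $M\|x_{n+1}-x_n\|^2\leq H(z_n)-H(z_{n+1})=0$ for such $n$, so the iterates are eventually stationary and (a) is immediate. Hence I may assume $H(z_n)>l$ for all $n$.

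Next I would secure the applicability of the Kurdyka--\L{}ojasiewicz inequality. Since $\Omega$ is nonempty, compact and connected (Lemma \ref{l7}(c)), $H$ is constant on it (Lemma \ref{l7}(d)) and is a KL function by hypothesis, Lemma \ref{unif-KL-property} provides $\varepsilon,\eta>0$ and $\varphi\in\Theta_\eta$ such that $\varphi'(H(z)-l)\dist(0,\partial H(z))\geq 1$ for every $z$ with $\dist(z,\Omega)<\varepsilon$ and $l<H(z)<l+\eta$. Because $\dist(z_n,\Omega)\to 0$ by Lemma \ref{l7}(b) and $H(z_n)\downarrow l$, there is an index $n_0\geq 2$ beyond which both conditions hold, so the KL inequality applies to every $z_n$ with $n\geq n_0$.

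The core computation combines the three ingredients. Bounding $\dist(0,\partial H(z_n))$ by the quantity $\|w_n\|\leq N(\|x_n-x_{n-1}\|+\|x_{n-1}-x_{n-2}\|)$ from $(H_2)$, the KL inequality furnishes a lower bound on $\varphi'(H(z_n)-l)$; feeding this into the concavity estimate $\varphi(H(z_n)-l)-\varphi(H(z_{n+1})-l)\geq\varphi'(H(z_n)-l)\bigl(H(z_n)-H(z_{n+1})\bigr)$ and using the descent bound $H(z_n)-H(z_{n+1})\geq M\|x_{n+1}-x_n\|^2$ from $(H_1)$, I obtain
\begin{equation*}
M\|x_{n+1}-x_n\|^2\leq N\bigl(\|x_n-x_{n-1}\|+\|x_{n-1}-x_{n-2}\|\bigr)\bigl(\varphi(H(z_n)-l)-\varphi(H(z_{n+1})-l)\bigr)
\end{equation*}
for all $n\geq n_0$. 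Taking square roots and applying $\sqrt{ab}\leq\tfrac{1}{6}a+\tfrac{3}{2}b$ with $a=\|x_n-x_{n-1}\|+\|x_{n-1}-x_{n-2}\|$ and $b=\tfrac{N}{M}\bigl(\varphi(H(z_n)-l)-\varphi(H(z_{n+1})-l)\bigr)$ gives
\begin{equation*}
\|x_{n+1}-x_n\|\leq\tfrac{1}{6}\bigl(\|x_n-x_{n-1}\|+\|x_{n-1}-x_{n-2}\|\bigr)+C\bigl(\varphi(H(z_n)-l)-\varphi(H(z_{n+1})-l)\bigr)
\end{equation*}
with $C=\tfrac{3N}{2M}$. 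Summing from $n_0$ to an arbitrary $P$, the $\varphi$-differences telescope and are bounded by $C\varphi(H(z_{n_0})-l)$, while the two lagged sums on the right contribute at most $\tfrac{1}{3}\sum_{n=n_0}^{P}\|x_{n+1}-x_n\|$ up to finitely many boundary terms; since $\tfrac{1}{3}<1$ the latter can be absorbed into the left-hand side, yielding a bound on $\sum_{n=n_0}^{P}\|x_{n+1}-x_n\|$ independent of $P$. This proves (a).

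Part (b) then follows at once: the finite length in (a) makes $(x_n)_{n\in\N}$ a Cauchy sequence, hence convergent to some $x\in\R^m$, and Lemma \ref{l5} guarantees that every cluster point of $(x_n)_{n\in\N}$ is a critical point of $f+g$, so $x\in\crit(f+g)$. I expect the main obstacle to be the core computation: not only verifying that the KL inequality is legitimately applicable (which relies on the compactness--connectedness facts of Lemma \ref{l7} and the uniformization of Lemma \ref{unif-KL-property}), but above all choosing the weight in the Young inequality so that \emph{both} inertial contributions $\|x_n-x_{n-1}\|$ and $\|x_{n-1}-x_{n-2}\|$ appearing on the right can be absorbed. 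The presence of two lagged difference terms, rather than the single one occurring for the non-inertial forward--backward method, is what makes the telescoping/absorption bookkeeping more delicate than in \cite{b-sab-teb}.
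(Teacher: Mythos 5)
Your proof is correct and follows essentially the same route as the paper's: the split into the trivial case ($H$ attains its limit value) and the generic case, the uniformized KL inequality of Lemma \ref{unif-KL-property} applied on $\Omega=\omega((x_n,x_{n-1})_{n\geq 1})$ via Lemma \ref{l7}, and the combination of $(H_1)$--$(H_3)$ into the key estimate $M\|x_{n+1}-x_n\|^2\le N\bigl(\|x_n-x_{n-1}\|+\|x_{n-1}-x_{n-2}\|\bigr)\bigl(\varphi(H(x_n,x_{n-1})-H(\ol x,\ol x))-\varphi(H(x_{n+1},x_{n})-H(\ol x,\ol x))\bigr)$. The only cosmetic difference is the last step: you absorb the two lagged terms directly through Young's inequality and a telescoping summation, whereas the paper derives $a_{n+1}\le\frac{1}{4}(a_n+a_{n-1})+\e_n$ and invokes the summability result of Lemma \ref{fejer2}; both arguments are valid.
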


\begin{proof} (a) According to Lemma \ref{l7} we can consider an element $\ol x\in\crit(f+g)$ such that $(\ol x, \ol x) \in \omega ((x_n,x_{n-1})_{n\geq 1})$. 
In analogy to the proof of Lemma \ref{l6} (by taking into account also the decrease property (H1)) one can
easily show that $\lim_{n\rightarrow +\infty}H(x_n,x_{n-1})=H(\ol x,\ol x)$. We separately treat the following two cases. 

I. There exists $\ol n\in\N$ such that $H(x_{\ol n},x_{\ol n-1})=H(\ol x,\ol x)$. The decrease property in Lemma \ref{l6}(H1) implies 
$H(x_{n},x_{n-1})=H(\ol x,\ol x)$ for every $n\geq \ol n$. One can show inductively that the sequence $(x_n,x_{n-1})_{n\geq \ol n}$ is constant and the 
conclusion follows. 

II. For all $n\geq 1$ we have $H(x_n,x_{n-1})>H(\ol x,\ol x)$. Take $\Omega:=\omega ((x_n,x_{n-1})_{n\geq 1})$.

In virtue of Lemma \ref{l7}(c) and (d) and Lemma \ref{unif-KL-property}, the KL property of $H$ leads to the existence of positive numbers $\e$ and $\eta$ and 
a concave function $\varphi\in\Phi_{\eta}$ such that for all
\begin{align}\label{int-H} 
(x,y)\in & \{(u,v)\in\R^m\times\R^m: \dist((u,v),\Omega)<\e\} \nonumber \\ 
 & \cap\{(u,v)\in\R^m\times\R^m:H(\ol x,\ol x)<H(u,v)<H(\ol x,\ol x)+\eta\}\end{align}
one has
\begin{equation}\label{ineq-H}\varphi'(H(x,y)-H(\ol x,\ol x))\dist((0,0),\p H(x,y))\ge 1.\end{equation}

Let $n_1\in\N$ such that $H(x_n,x_{n-1})<H(\ol x,\ol x)+\eta$ for all $n\geq n_1.$  
According to Lemma \ref{l7}(b), there exists $n_2\in \N$ such that $\dist((x_n,x_{n-1}),\Omega)<\e$ for all $n\ge n_2.$

Hence the sequence $(x_n,x_{n-1})_{n\geq \ol n}$ where $\ol n=\max\{n_1,n_2\}$, belongs to the intersection \eqref{int-H}. So we have (see \eqref{ineq-H})
 $$\varphi'(H(x_n,x_{n-1})-H(\ol x,\ol x))\dist((0,0),\p H(x_n,x_{n-1}))\ge 1 \ \forall n\geq\ol n.$$
Since $\varphi$ is concave, it holds 
\begin{align*}
\varphi(H(x_n,x_{n-1})-H(\ol x,\ol x))-\varphi(H(x_{n+1},x_{n})-H(\ol x,\ol x)) & \ge \\ 
\varphi'(H(x_n,x_{n-1})-H(\ol x,\ol x))\cdot(H(x_n,x_{n-1})-H(x_{n+1},x_{n})) & \ge \\
\frac{H(x_n,x_{n-1})-H(x_{n+1},x_{n})}{\dist((0,0),\p H(x_n,x_{n-1}))} & \ \forall n\geq \ol n.
\end{align*}

Let $M,N > 0$ be the real numbers furnished by Lemma \ref{l6}. According to Lemma \ref{l6}($H_2$) there exists $w_{n}\in \p H(x_{n},x_{n-1})$ such that $\|w_{n}\|\le N(\|x_{n}-x_{n-1}\|+\|x_{n-1}-x_{n-2}\|)$ for all 
$n\geq 2$. Then obviously $\dist((0,0),\p H(x_n,x_{n-1}))\le\|w_n\|,$ hence
\begin{align*}
\varphi(H(x_n,x_{n-1})-H(x^0,x^0))-\varphi(H(x_{n+1},x_{n})-H(x^0,x^0)) & \ge \\
\frac{H(x_n,x_{n-1})-H(x_{n+1},x_{n})}{\|w_n\|} & \ge \\
\frac{H(x_n,x_{n-1})-H(x_{n+1},x_{n})}{N(\|x_{n}-x_{n-1}\|+\|x_{n-1}-x_{n-2}\|)} & \ \forall n\geq \ol n.
\end{align*}
On the other hand, from Lemma \ref{l6}($H_1$) we obtain that $$H(x_n,x_{n-1})-H(x_{n+1},x_{n})\ge M\|x_{n+1}-x_{n}\|^2 \ \forall n\geq 1.$$
Hence, one has
\begin{align*}
\varphi(H(x_n,x_{n-1})-H(x^0,x^0))-\varphi(H(x_{n+1},x_{n})-H(x^0,x^0)) & \ge \\ 
\frac{M\|x_{n+1}-x_{n}\|^2}{N(\|x_{n}-x_{n-1}\|+\|x_{n-1}-x_{n-2}\|)} & \ \forall n\geq \ol n.
\end{align*}

For all $n\geq 1$, let us denote $\frac{N}{M}(\varphi(H(x_n,x_{n-1})-H(\ol x,\ol x))-\varphi(H(x_{n+1},x_{n})-H(\ol x,\ol x)))=\e_n$ and $\|x_{n}-x_{n-1}\|=a_n.$
Then the last inequality becomes
\begin{equation}\label{ineq-e-a}\e_n\ge\frac{a_{n+1}^2}{a_n+a_{n-1}} \ \forall n\geq\ol n.\end{equation}

Obviously, since $\varphi\geq 0$, for $S\geq 1$ we have 
$\sum_{n=1}^S\e_n=(N/M)(\varphi(H(x_1,x_{0})-H(\ol x,\ol x))-\varphi(H(x_{S+1},x_{S})-H(\ol x,\ol x)))\leq (N/M)(\varphi(H(x_1,x_{0})-H(\ol x,\ol x)))$, 
hence $\sum_{n\geq 1}\e_n<+\infty.$

On the other hand, from \eqref{ineq-e-a} we derive 
$$a_{n+1}=\sqrt{\e_n(a_n+a_{n-1})}\le \frac{1}{4}(a_n+a_{n-1})+\e_n \ \forall n\geq\ol n.$$
Hence, according to Lemma \ref{fejer2}, $\sum_{n\geq 1}a_n<+\infty$, that is $\sum_{n\in\N}\|x_n-x_{n+1}\|<+\infty.$ 

(b) It follows from (a) that $(x_n)_{n\in\N}$ is a Cauchy sequence, hence it is convergent. Applying Lemma \ref{l5}, there exists $x\in\crit(f+g)$ such that 
$\lim_{n\rightarrow+\infty}x_n=x$.
\end{proof}

Since the class of semi-algebraic functions is closed under addition (see for example \cite{b-sab-teb}) and 
$(x,y) \mapsto c\|x-y\|^2$ is semi-algebraic for $c>0$, we obtain also the following direct consequence.  

\begin{corollary}\label{cor-f+g} In the setting of Problem 1, chose $\m,\ua, \b$ satisfying \eqref{mu-sigma}, $M_1, M_2$ satisfying \eqref{M1M2} and $\ua < \oa$ such that $M_1 > M_2$. Assume that $f+g$ is coercive and 
semi-algebraic. Let $(x_n)_{n\in\N}$ be the sequence generated by Algorithm 1. Then the following statements are true:
\begin{itemize}
 \item[(a)] $\sum_{n\in\N}\|x_{n+1}-x_n\|<+\infty$;
 \item[(b)] there exists $x\in\crit(f+g)$ such that $\lim_{n\rightarrow+\infty}x_n=x$.
\end{itemize}
\end{corollary}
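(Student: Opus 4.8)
The plan is to deduce this corollary directly from Theorem \ref{t1}, the only gap to bridge being that the theorem assumes the auxiliary function $H$ to be a KL function, whereas here we are only given that $f+g$ is coercive and semi-algebraic. Since all the hypotheses on $\m,\ua,\b,M_1,M_2$ and on $\ua<\oa$, as well as the coercivity of $f+g$, are carried over verbatim, it suffices to verify that under the semi-algebraicity assumption the function $H(x,y)=(f+g)(x)+M_2\|x-y\|^2$ is automatically a KL function; then Theorem \ref{t1} applies word for word and yields both (a) and (b).

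First I would argue that $H$ is semi-algebraic. The function $f+g$ is semi-algebraic on $\R^m$ by hypothesis, and its lift $(x,y)\mapsto (f+g)(x)$ to $\R^m\times\R^m$ is again semi-algebraic, since its graph is the product of the graph of $f+g$ with $\R^m$. The map $(x,y)\mapsto M_2\|x-y\|^2$ is semi-algebraic for $M_2>0$, as already noted just before the statement. Invoking the closure of the class of semi-algebraic functions under addition, $H$ is semi-algebraic.

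Next I would use the standard fact that every proper and lower semicontinuous semi-algebraic function satisfies the KL property at each point of its domain, i.e.\ is a KL function; this is precisely the result recalled among the examples of Section 2, with references such as \cite{b-d-l2006, att-b-red-soub2010, b-sab-teb}. Hence $H$ is a KL function.

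Finally, with $H$ established to be a KL function, all the hypotheses of Theorem \ref{t1} are met, and its conclusions (a) and (b) are exactly the assertions (a) and (b) of the corollary. I do not expect any genuine obstacle here: the entire content is the implication \emph{semi-algebraic $\Rightarrow$ KL}, combined with the stability of semi-algebraicity under the two operations (lifting and addition) used to build $H$ from $f+g$ and $\|x-y\|^2$; the substantive analytic work has all been carried out already in the proof of Theorem \ref{t1}.
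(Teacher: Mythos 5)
Your proposal is correct and matches the paper's own (implicit) argument exactly: the paper derives the corollary from Theorem \ref{t1} by noting that $H$ is semi-algebraic, being the sum of the semi-algebraic function $(x,y)\mapsto (f+g)(x)$ and the semi-algebraic function $(x,y)\mapsto M_2\|x-y\|^2$, and that semi-algebraic functions are KL functions. No gap.
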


\begin{remark}\label{necoercive} As one can notice by taking a closer look at the proof of Lemma \ref{l5}, the conclusion of this statement as the ones of
Lemma \ref{l6}, Lemma \ref{l7}, Theorem \ref{t1} and Corollary \ref{cor-f+g} remain true, if instead of imposing that $f+g$ is coercive, we assume that
$f+g$ is bounded from below and the sequence $(x_n)_{n\in\N}$ generated by Algorithm 1 is bounded. This observation is useful when dealing with 
optimization problems as the ones considered in Subsection 4.2. 
\end{remark}

\section{Numerical experiments}

This section is devoted to the presentation of two numerical experiments which illustrate the applicability of the algorithm proposed in this work. In both numerical experiments
we considered $F = \frac{1}{2}\|\cdot\|^2$ and set $\mu=\sigma=1$.

\subsection{Detecting minimizers of nonconvex optimization problems}

As emphasized in \cite[Section 5.1]{ipiano} and \cite[Exercise 1.3.9]{bertsekas} one of the aspects which makes algorithms with inertial/memory effects
useful is given by the fact that they are able to detect optimal solutions of minimization problems which cannot be found by their non-inertial variants. 
In this subsection we show that this phenomenon arises even when solving problems of type \eqref{ex-opt-pb}, where the nonsmooth function $f$ is nonconvex. A similar situation has
been addressed in \cite{ipiano}, however, by assuming that $f$ is convex.

\begin{figure}[tb]	
	\centering
	\captionsetup[subfigure]{position=top}
	\subfloat[Contour plot]{\includegraphics*[viewport= 78 210 540 580, width=0.48\textwidth]{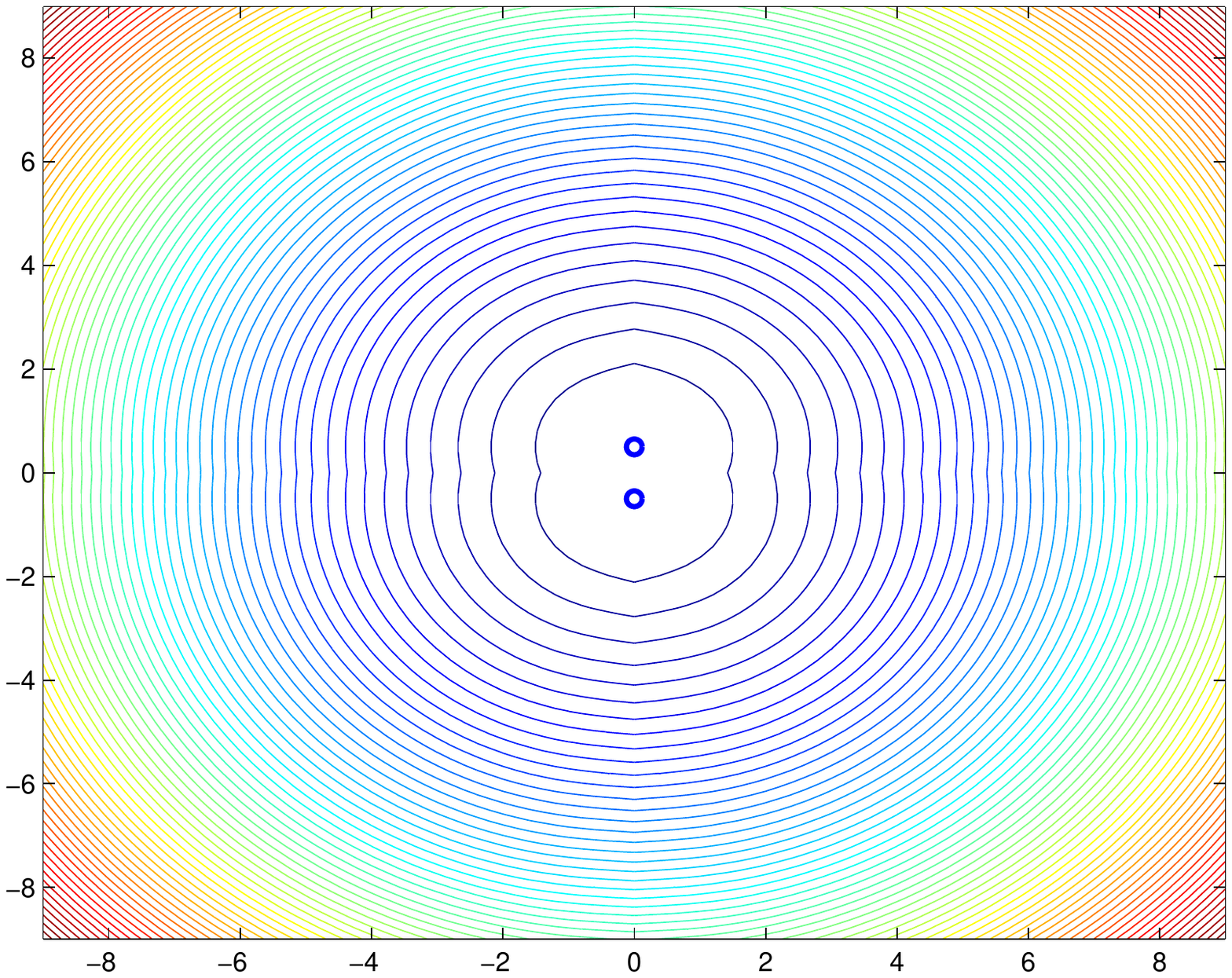}} \hspace{0.3mm}
	\subfloat[Graph]{\includegraphics*[viewport= 8 149 595 644, width=0.48\textwidth]{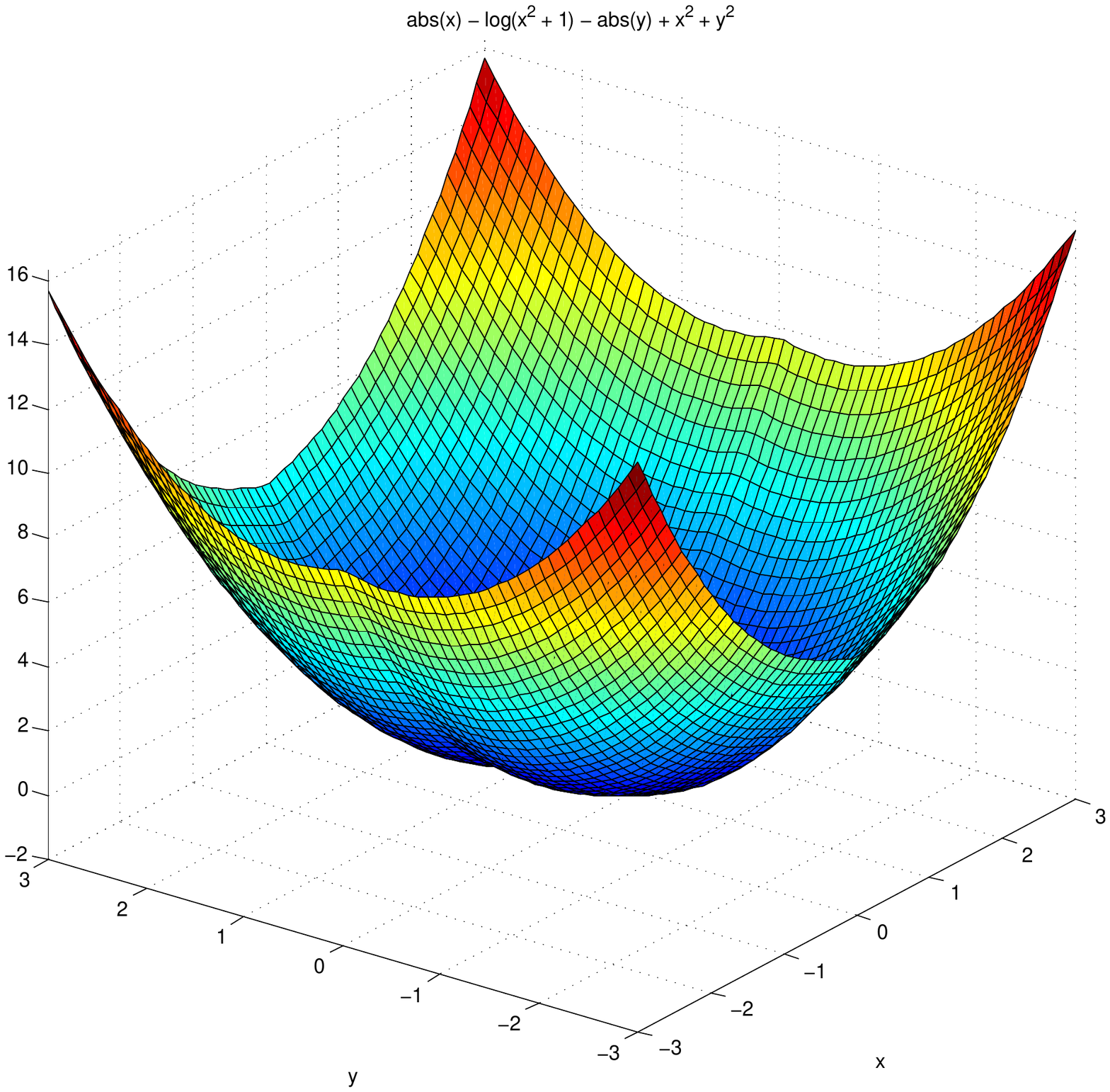}}  \hspace{0.3mm}
	\caption{\small Contour plot and graph of the objective function in \eqref{ex-opt-pb}. The two global optimal 
	solutions $(0,0.5)$ and $(0,-0.5)$ are marked on the first image.}
	\label{fig:lev-3d}	
\end{figure}	

Consider the optimization problem 
\begin{equation}\label{ex-opt-pb} \ \inf_{(x_1,x_2)\in\R^2} |x_1|-|x_2| + x_1^2-\log(1+x_1^2)+x_2^2 . \end{equation}

\begin{figure}[H]	
	\centering
	\captionsetup[subfigure]{position=top}
	\subfloat[$x_0=(-8,-8), \beta =0$]{\includegraphics*[viewport= 124 250 490 550, width=0.32\textwidth]{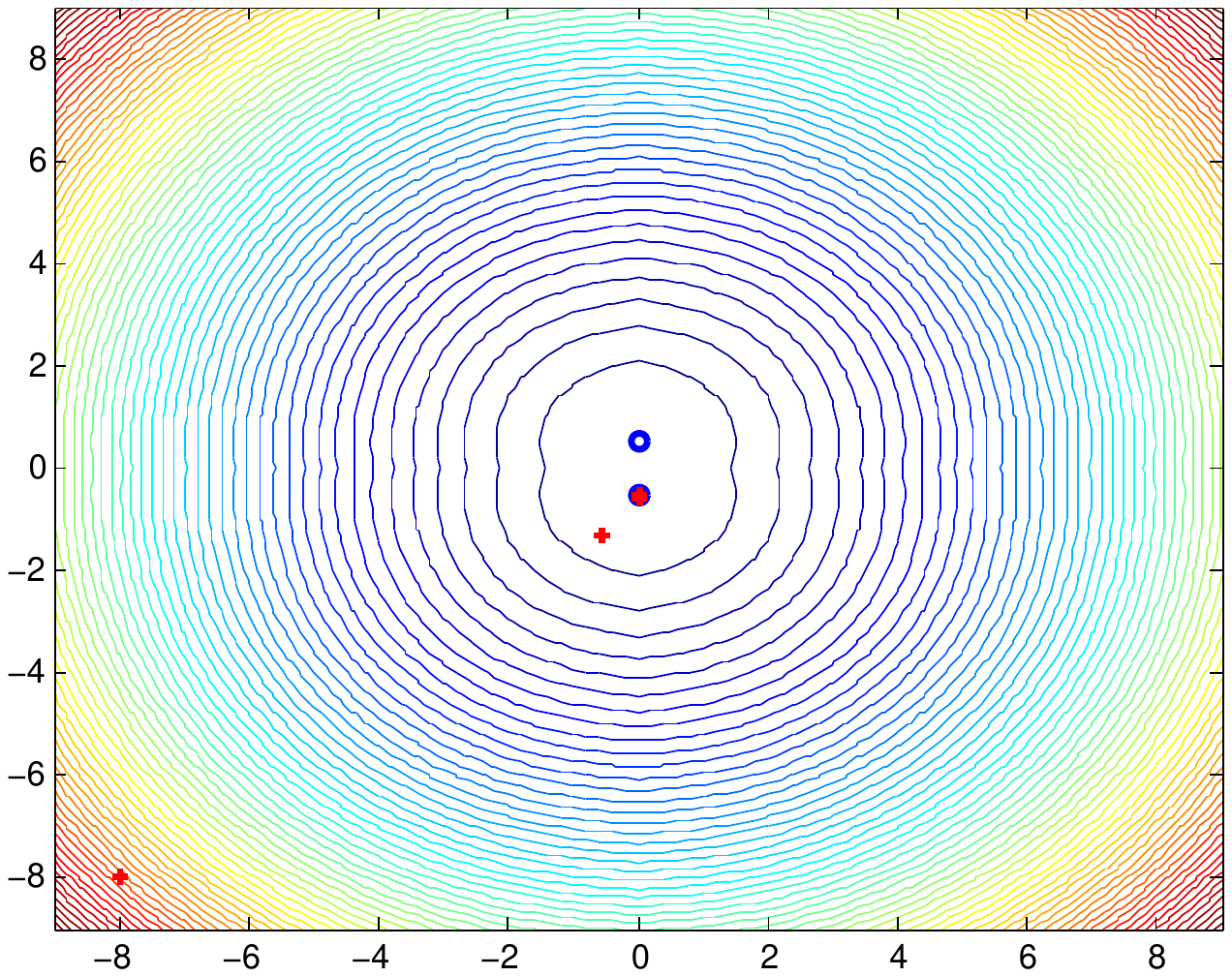}} \hspace{0.3mm}
	\subfloat[$x_0=(-8,-8), \beta =1.99$]{\includegraphics*[viewport= 124 250 490 550, width=0.32\textwidth]{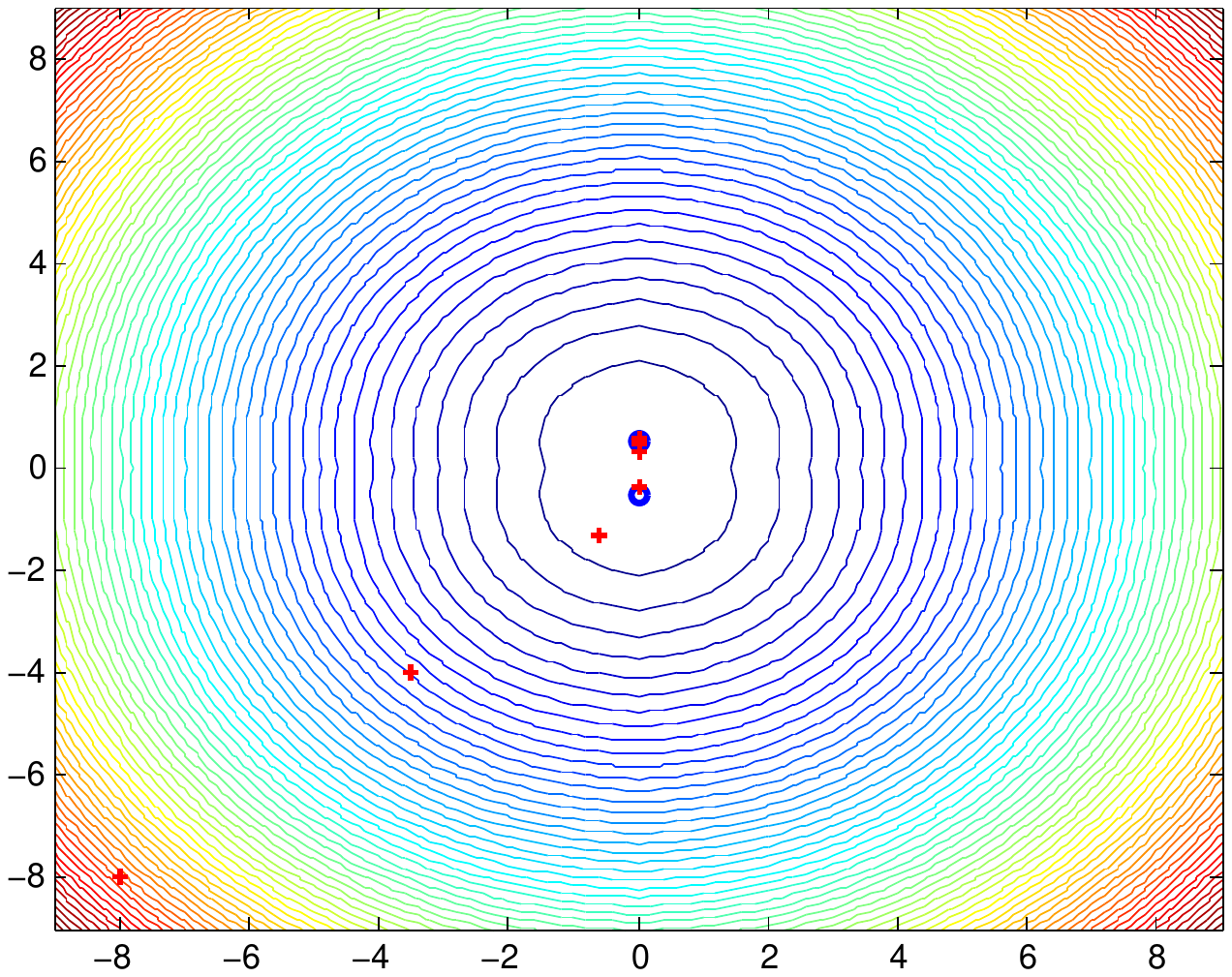}}  \hspace{0.3mm}
	\subfloat[$x_0=(-8,-8), \beta =2.99$]{\includegraphics*[viewport= 124 250 490 550, width=0.32\textwidth]{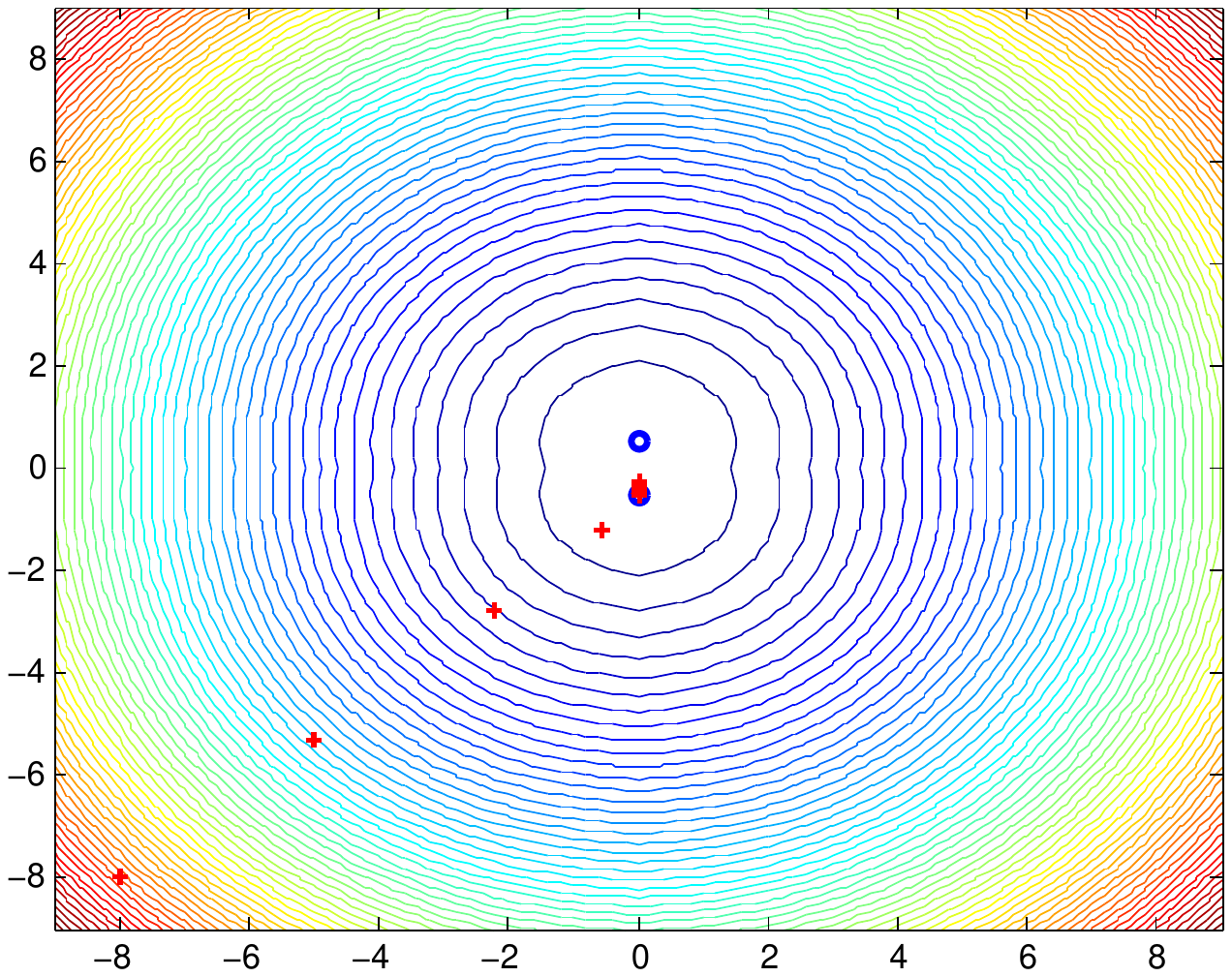}}\\
	\subfloat[$x_0=(-8,8), \beta =0$]{\includegraphics*[viewport= 124 250 490 550, width=0.32\textwidth]{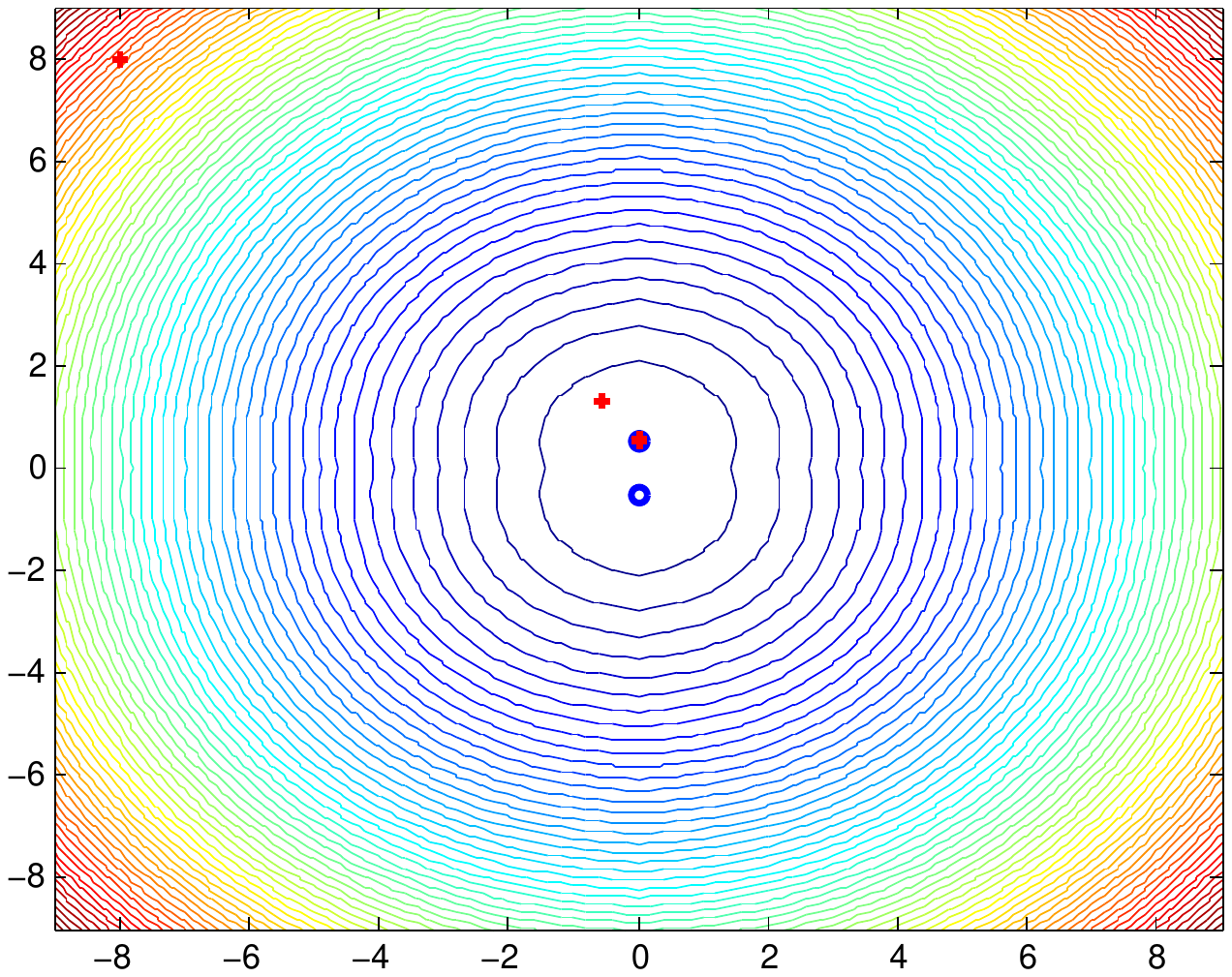}} \hspace{0.3mm}
	\subfloat[$x_0=(-8,8), \beta =1.99$]{\includegraphics*[viewport= 124 250 490 550, width=0.32\textwidth]{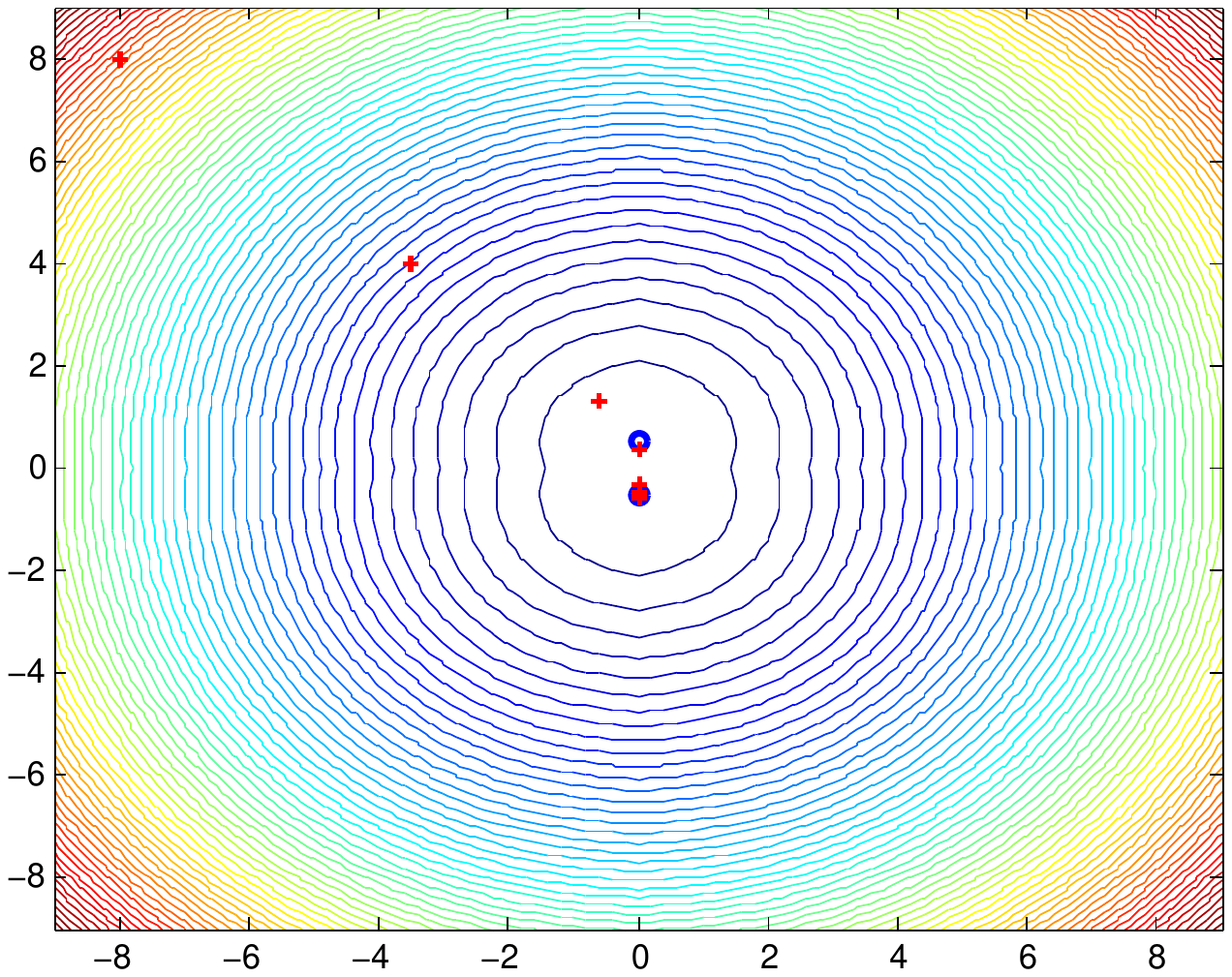}}  \hspace{0.3mm}
	\subfloat[$x_0=(-8,8), \beta =2.99$]{\includegraphics*[viewport= 124 250 490 550, width=0.32\textwidth]{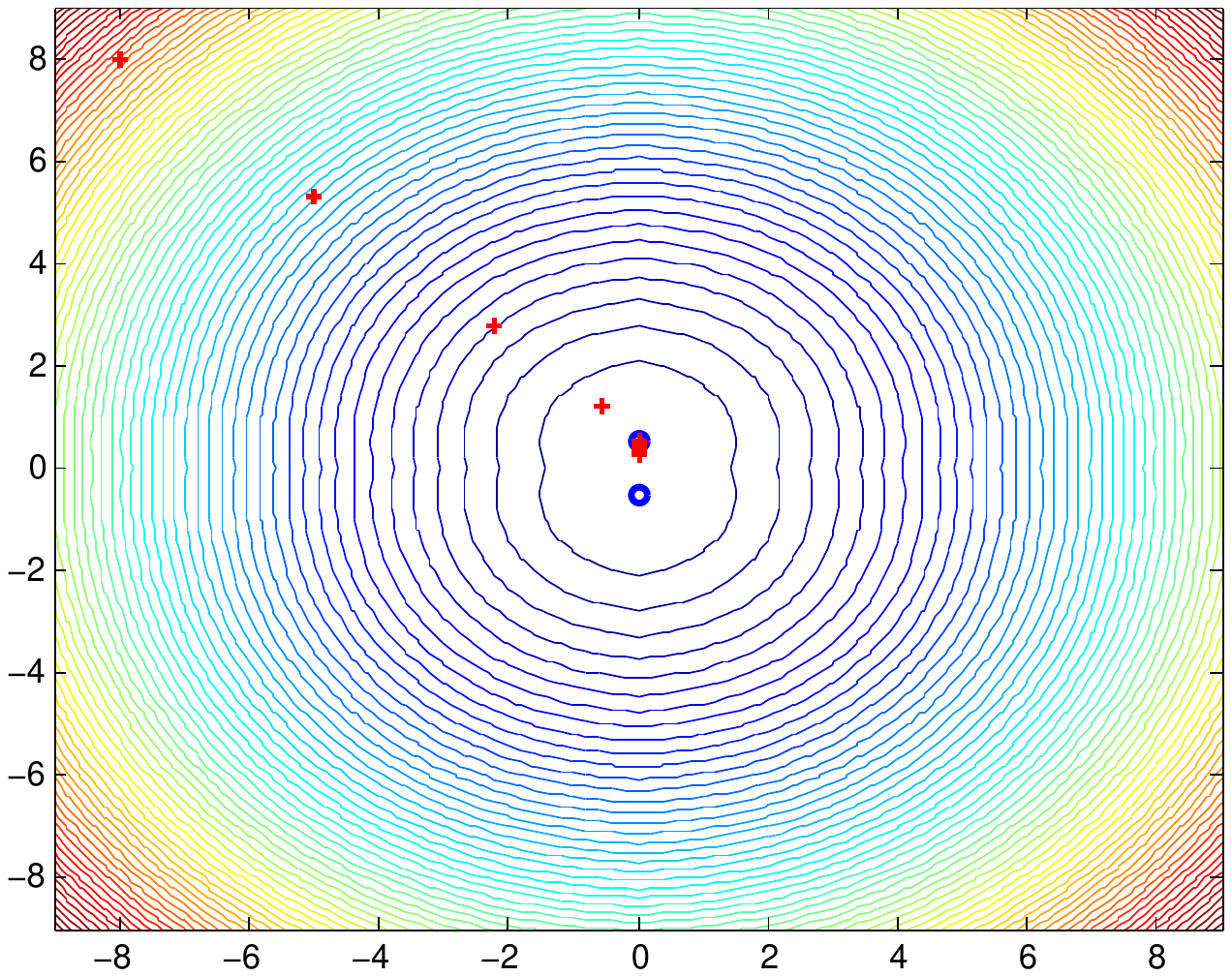}}\\
	\subfloat[$x_0=(8,-8), \beta =0$]{\includegraphics*[viewport= 124 250 490 550, width=0.32\textwidth]{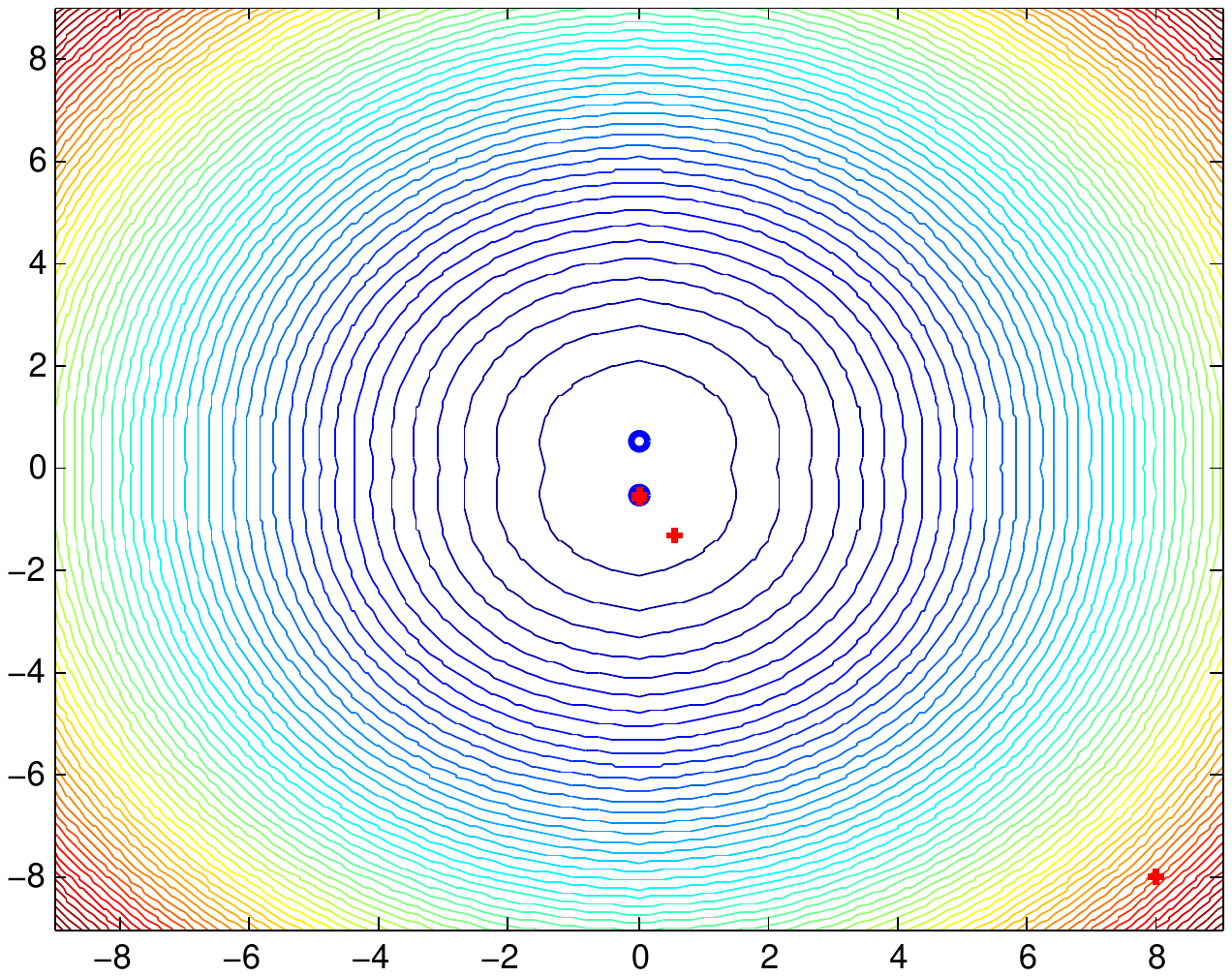}} \hspace{0.3mm}
	\subfloat[$x_0=(8,-8), \beta =1.99$]{\includegraphics*[viewport= 124 250 490 550, width=0.32\textwidth]{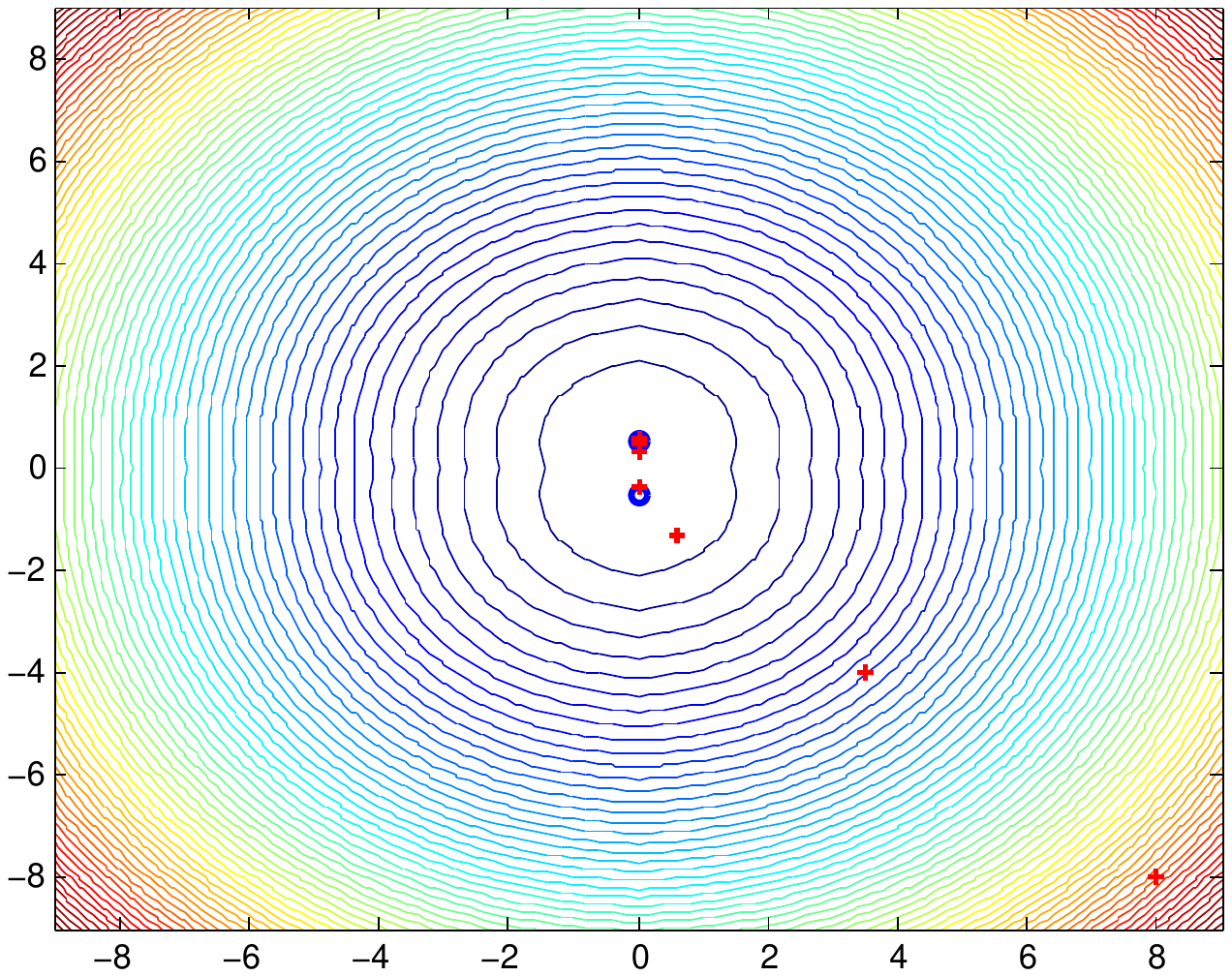}}  \hspace{0.3mm}
	\subfloat[$x_0=(8,-8), \beta =2.99$]{\includegraphics*[viewport= 124 250 490 550, width=0.32\textwidth]{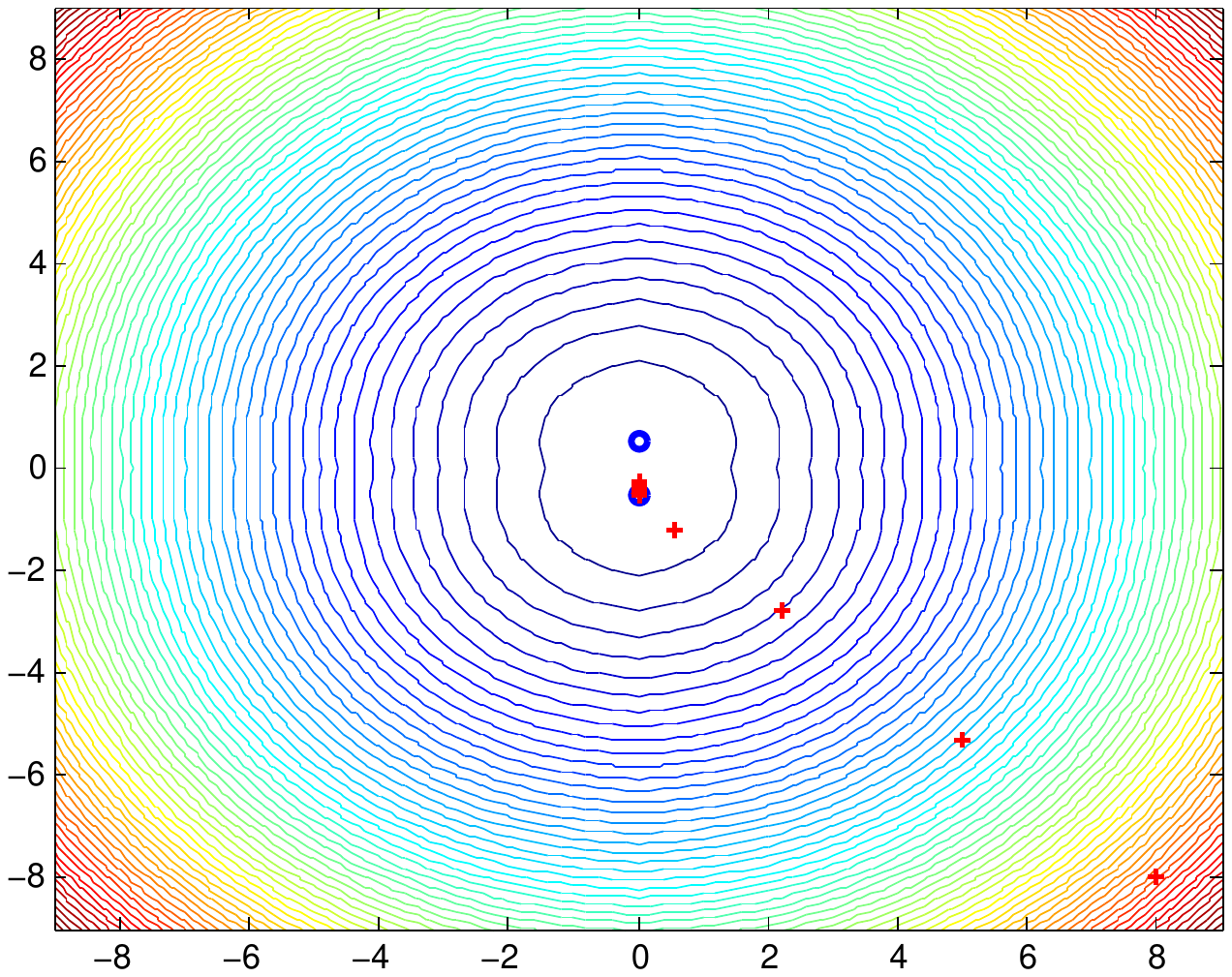}}\\
	\subfloat[$x_0=(8,8), \beta =0$]{\includegraphics*[viewport= 124 250 490 550, width=0.32\textwidth]{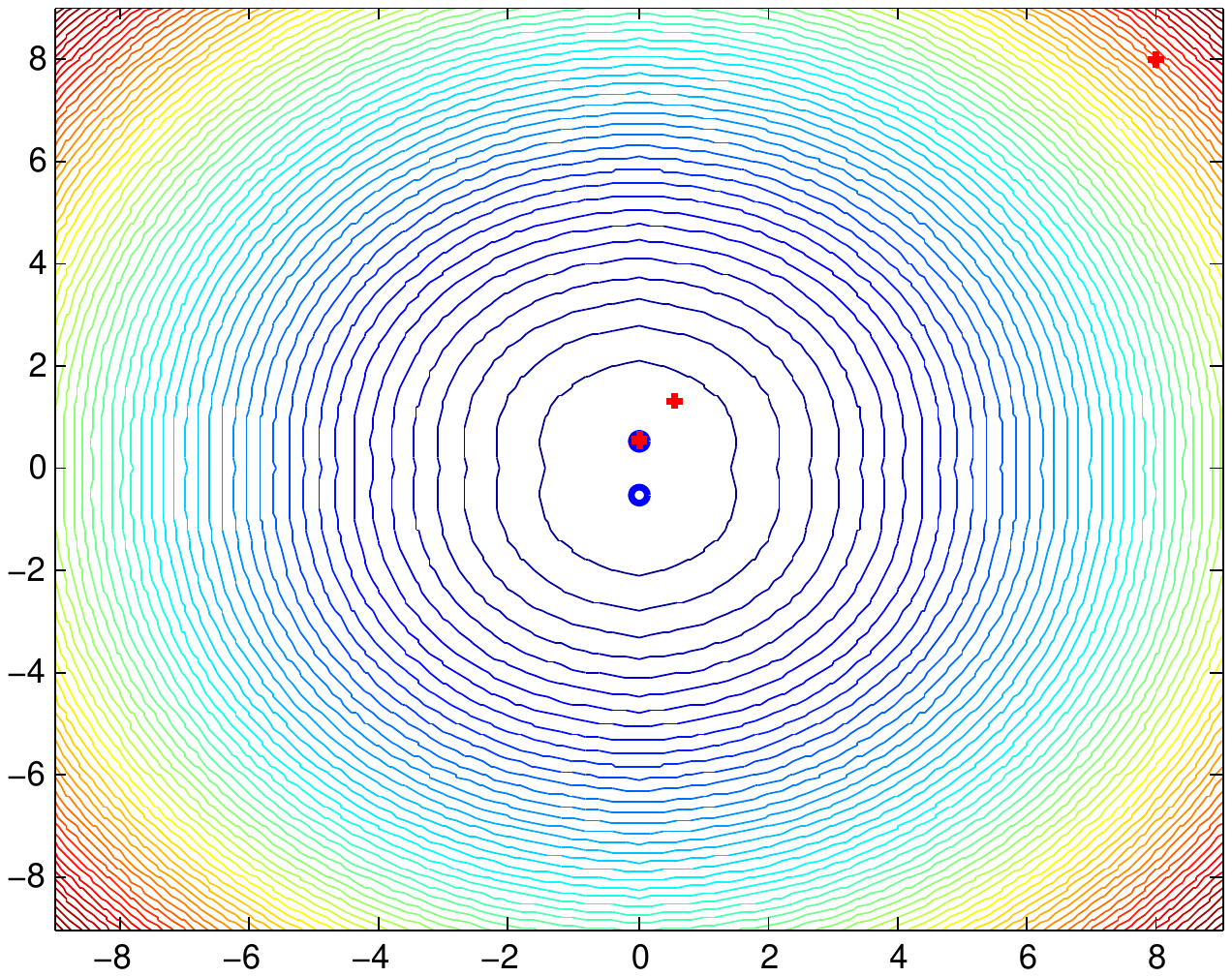}} \hspace{0.3mm}
	\subfloat[$x_0=(8,8), \beta =1.99$]{\includegraphics*[viewport= 124 250 490 550, width=0.32\textwidth]{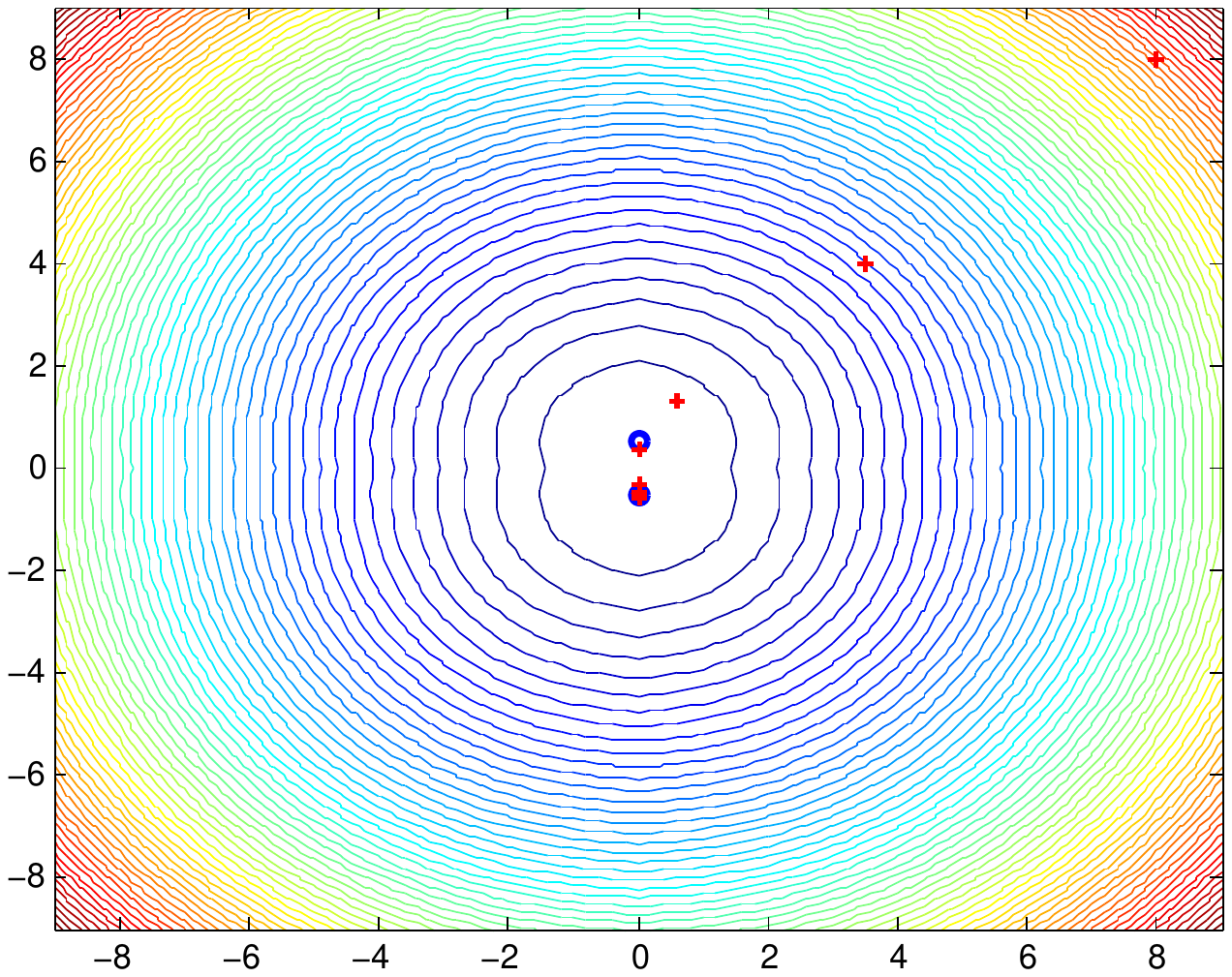}}  \hspace{0.3mm}
	\subfloat[$x_0=(8,8), \beta =2.99$]{\includegraphics*[viewport= 124 250 490 550, width=0.32\textwidth]{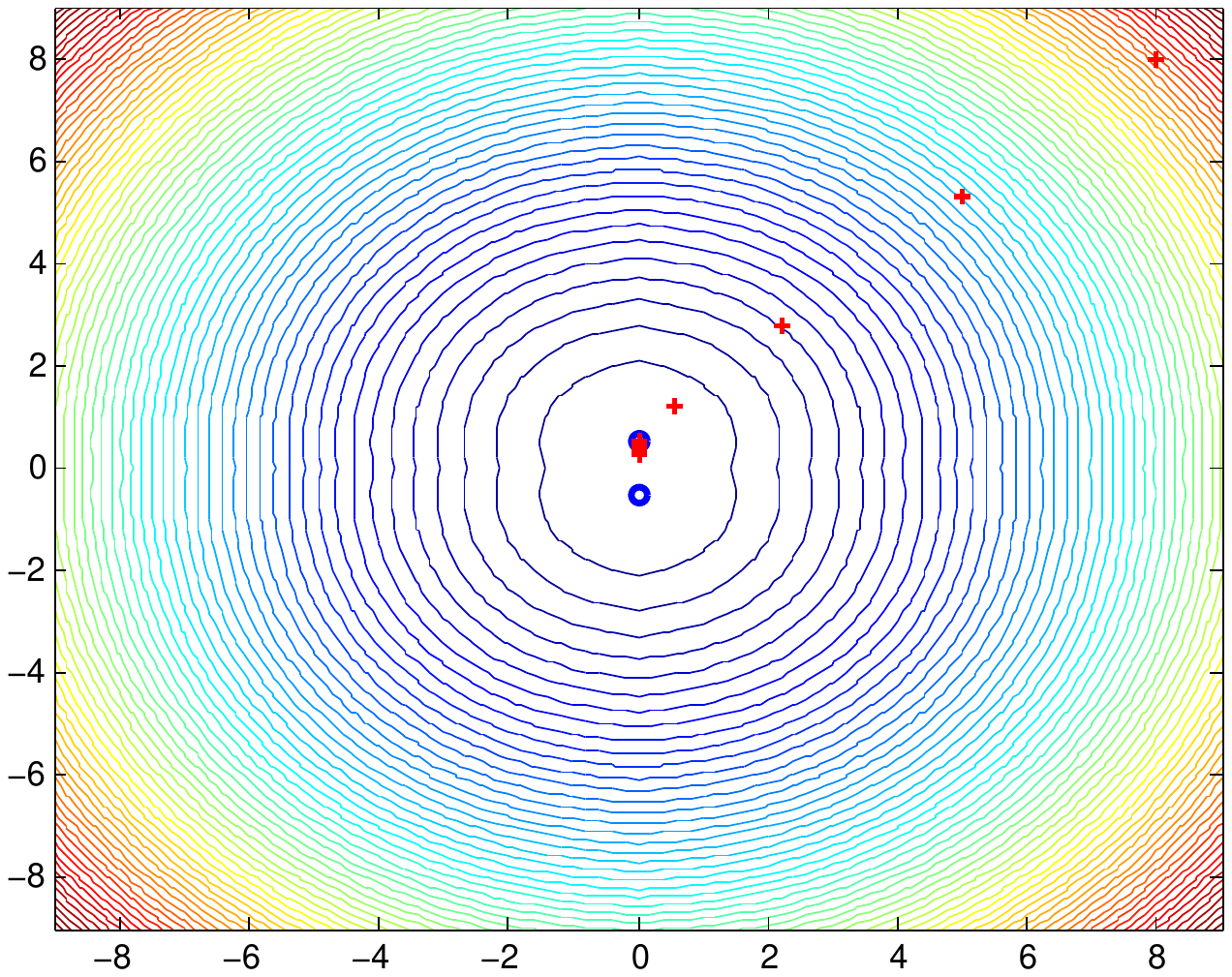}}
	\caption{\small Algorithm 1 after $100$ iterations and with starting points $(-8,-8), (-8,8), (8,-8)$ and $(8,8)$, respectively: the first column shows the iterates of the non-inertial version ($\beta_n=\beta = 0$ for all 
	$n\geq 1$), the second column the ones of the  inertial version with $\beta_n= \beta = 1.99$ for all $n\geq 1$ and the third column the ones of the inertial version with $\beta_n=\beta = 2.99$ for all $n\geq 1$.}
	\label{fig:mm}	
\end{figure}

The function $f : \R^2\rightarrow\R, f(x_1,x_2)= |x_1|-|x_2|,$ is nonconvex and continuous, the function $g : \R^2\rightarrow\R, g(x_1,x_2)=x_1^2-\log(1+x_1^2)+x_2^2,$ 
is continuously differentiable with Lipschitz continuous gradient with Lipschitz constant $L_{\nabla g}=9/4$ and one can easily prove that $f+g$ is coercive. Furthermore, combining \cite[the remarks after Definition 4.1]{att-b-red-soub2010}, \cite[Remark 5(iii)]{b-d-l-s2007} and 
\cite[Section 5: Example 4 and Theorem 3]{b-sab-teb}, one can easily conclude that $H$ in Theorem \ref{t1} is a KL function. 
By considering the first order optimality conditions 
$$-\nabla g(x_1,x_2)\in\partial f(x_1,x_2)=\partial (|\cdot|)(x_1)\times\partial (-|\cdot|)(x_2)$$ 
and by noticing that for all $x\in\R$ we have $$\partial(|\cdot|)(x)=\left\{
\begin{array}{ll}
1, & \mbox {if } x>0\\
-1, & \mbox {if } x<0\\
$[-1,1]$, & \mbox {if } x=0
\end{array}\right. \mbox{ and } \partial(-|\cdot|)(x)=\left\{
\begin{array}{ll}
-1, & \mbox {if } x>0,\\
1, & \mbox {if } x<0,\\
\{-1,1\}, & \mbox {if } x=0,
\end{array}\right.$$
(for the latter, see for example \cite{boris-carte}), one can easily determine the two critical points $(0,1/2)$ and $(0,-1/2)$ of \eqref{ex-opt-pb}, which are actually 
both optimal solutions of this minimization problem. In Figure \ref{fig:lev-3d}  the level sets and the graph of the objective function in \eqref{ex-opt-pb} are represented.

For $\gamma>0$ and $x=(x_1,x_2)\in\R^2$ we have (see Remark \ref{prox-nevid})
$$\prox\nolimits_{\gamma f}(x)=\argmin_{u\in \R^2}\left\{\frac{\|u-x\|^2}{2\gamma}+f(u)\right\} =\prox\nolimits_{\gamma |\cdot|}(x_1)\times \prox\nolimits_{\gamma (-|\cdot|)}(x_2),$$ 
where in the first component one has the well-known shrinkage operator
$$\prox\nolimits_{\gamma |\cdot|}(x_1)=x_1-\sgn(x_1)\cdot\min\{|x_1|,\gamma\},$$ while 
for the proximal operator in the second component the following formula can be proven
$$\prox\nolimits_{\gamma (-|\cdot|)}(x_2)=\left\{
\begin{array}{ll}
x_2+\gamma, & \mbox {if } x_2>0\\
x_2-\gamma, & \mbox {if } x_2<0\\
\{-\gamma,\gamma\}, & \mbox {if } x_2=0.
\end{array}\right.$$

We implemented Algorithm 1 by choosing $\beta_n=\beta=0$ for all $n\geq 1$ (which corresponds to the non-inertial version), $\beta_n=\beta=0.199$ for all $n \geq 1$ and 
$\beta_n=\beta=0.299$ for all $n\geq 1$, respectively, and by setting $\alpha_n=(0.99999-2\beta_n)/L_{\nabla g}$ for all $n\geq 1$. As starting points we considered the
corners of the box generated by the points $(\pm 8,\pm 8)$. Figure \ref{fig:mm} shows that independently of the four starting points we have the following phenomenon: the non-inertial version 
recovers only one of the two optimal solutions, situation which persists even when changing the value of $\alpha_n$; on the other hand, the inertial version is capable to 
find both optimal solutions, namely, one for $\beta = 0.199$  and the other one for $\beta=0.299$.

\subsection{Restoration of noisy blurred images}

The following numerical experiment concerns the restoration of a noisy blurred image by using a nonconvex misfit functional with nonconvex regularization.
For a given matrix $A \in \mathbb{R}^{m \times m}$ describing a blur operator and a
given vector $b \in \R^m$ representing the blurred and noisy image, the task is to estimate the unknown original image
$\ol x\in\R^m$ fulfilling
$$A\ol x=b.$$
To this end we solve the following regularized nonconvex minimization problem
 \begin{equation}\label{probimageproc-stud-zer-norm-wav}
\inf_{x \in \R^m} \left\{ \sum_{k=1}^M\sum_{l=1}^N\varphi\big((Ax-b)_{kl}\big) + \lambda \|Wx\|_0  \right\},
\end{equation}
where $\varphi:\R\rightarrow\R$, $\varphi(t)=\log(1+t^2),$ is derived form the Student's t distribution, 
$\lambda >0$ is a regularization parameter, 
$W:\R^m\rightarrow\R^m$ is a discrete Haar wavelet transform with 
four levels and $\|y\|_0=\sum_{i=1}^m|y_i|_0$ ($|\cdot|_0 = |\sgn(\cdot)|$) furnishes the number of nonzero entries of the vector $y=(y_1,...,y_m)\in\R^m$. In this context, $x \in \R^m$
represents the vectorized image $X\in\R^{M\times N}$, where $m = M\cdot N$ and $x_{i,j}$
denotes the normalized value of the pixel located in the $i$-th row and the $j$-th column, for
$i=1,\ldots,M$ and $j=1,\ldots,N$.

It is immediate that \eqref{probimageproc-stud-zer-norm-wav} can be written in the form \eqref{opt-pb}, by defining 
$f(x)= \lambda \|Wx\|_0$ and $g(x)= \sum_{k=1}^M\sum_{l=1}^N\varphi\big((Ax-b)_{kl}\big)$ for all $x\in\R^m$. 
By using that $W W^* = W^* W = I_m$, one can prove the following formula concerning the proximal operator of $f$
$$\prox\nolimits_{\gamma f}(x)=W^*\prox\nolimits_{\lambda\gamma\|\cdot\|_0}(Wx) \ \forall x \in \R^m \ \forall \gamma >0,$$
where for all $u=(u_1,...,u_m)$ we have (see \cite[Example 5.4(a)]{att-b-sv2013})
$$\prox\nolimits_{\lambda\gamma\|\cdot\|_0}(u)=(\prox\nolimits_{\lambda\gamma|\cdot|_0}(u_1),...,\prox\nolimits_{\lambda\gamma|\cdot|_0}(u_m))$$
and for all $t\in\R$ $$\prox\nolimits_{\lambda\gamma|\cdot|_0}(t)=\left\{
\begin{array}{ll}
t, & \mbox {if } |t|>\sqrt{2\lambda\gamma},\\
\{0,t\}, & \mbox {if } |t|=\sqrt{2\lambda\gamma},\\
0, & \mbox {otherwise.} 
\end{array}\right.$$
For the experiments we used the $256 \times 256$ boat test image which we first blurred by using a Gaussian 
blur operator of size $9 \times 9$ and standard deviation $4$ and to which we afterward added a zero-mean white 
Gaussian noise with standard deviation $10^{-6}$. In the first row of Figure \ref{fig:boat}  the original boat test image and the blurred and noisy one are represented, while in the second row
one has the reconstructed images by means of the non-inertial (for $\beta_n = \beta = 0$ for all $n\geq 1$)  and inertial versions (for $\beta_n= \beta = 10^{-7}$ for all $n \geq 1$) of Algorithm 1, respectively. 
We took as regularization parameter $\lambda=10^{-5}$ and set $\alpha_n=(0.999999 - 2\beta_n)/L_{\nabla g}$ for all $n \geq 1$, whereby the Lipschitz constant of the gradient of the smooth misfit function is
$L_{\nabla g} = 2$.

We compared the quality of the recovered images for $\beta_n = \beta$ for all $n\geq 1$ and different values of $\beta$ by making use of the improvement in signal-to-noise ratio 
(ISNR), which is defined as
$$ \text{ISNR}(n) = 10 \log_{10}\left( \frac{\left\|x-b\right\|^2}{\left\|x-x_n\right\|^2} \right),$$
where $x$, $b$ and $x_n$ denote the original, observed and estimated image at iteration $n$, respectively.

In the table below we list the values of the ISNR-function after $300$ iterations, whereby the case $\beta = 0$ corresponds to the non-inertial version of the algorithm.
One can notice that for $\beta$ taking very small values, the inertial version is competitive with the non-inertial one (actually it slightly outperforms it). 

\begin{table}
\begin{tabular}{ccccccc}
\toprule
$\beta$ &  $0.4$ & $0.2$ & $0.01$ & $0.0001$ & $10^{-7}$ & $0$\\
\midrule
ISNR(300) & $2.081946$ & $3.101028$ & $3.492989$ & $3.499428$ & $3.511135$ &  $3.511134$\\
\bottomrule
\end{tabular}
\caption{The ISNR values after 300 iterations for different choices of $\beta$.}
\end{table}

\begin{figure}[H]	
	\centering
	\captionsetup[subfigure]{position=top}
	{\includegraphics*[viewport= 112 224 526 601, width=0.8\textwidth]{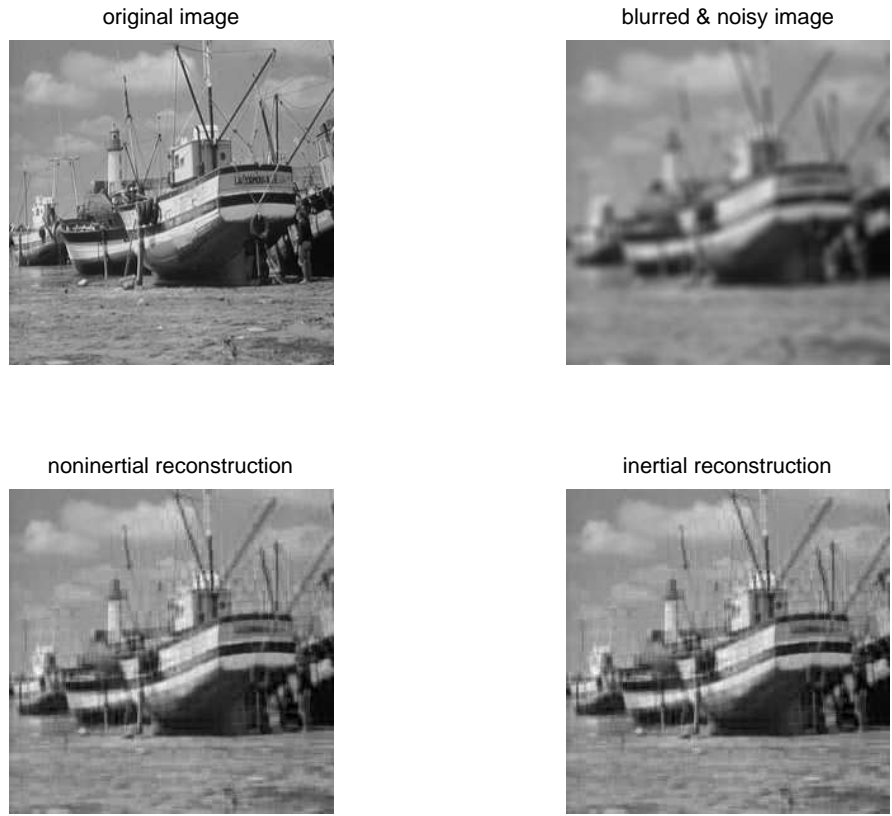}}
	\caption{\small The first row shows the original $256\times 256$ boat test image and the blurred and noisy one and the second row the reconstructed images after 300 iterations.}
	\label{fig:boat}	
\end{figure}

\end{document}